\newtheorem{theorem}{Theorem}[section]
\newtheorem{lemma}[theorem]{Lemma}
\newtheorem{cor}[theorem]{Corollary}
\newtheorem{rmk}[theorem]{Remark}
\newtheorem{conjecture}[theorem]{Conjecture}
\newtheorem{algorithm}[theorem]{Algorithm}
\newtheorem{dfn}[theorem]{Definition}
\newtheorem{prob}{Problem}[section]
\date{}
\title{Trees with the most subtrees -- an algorithmic approach}
\author{
Xiu-Mei Zhang \\
Department of Mathematics,\\
 Shanghai Jiao Tong University, \\
800 Dongchuan road, Shanghai, 200240, P. R. China \\
\\
Xiao-Dong Zhang \\
{\tt xiaodong@sjtu.edu.cn},\\
Department of Mathematics,\\
 Shanghai Jiao Tong University, \\
800 Dongchuan road, Shanghai, 200240, P. R. China \\
\\
Daniel Gray \\
{\tt dgray1@ufl.edu} \\
Department of Mathematics,\\
University of Florida,
Gainesville, FL, 32611 \\
\\
Hua Wang \\
{\tt hwang@georgiasouthern.edu},\\
Department of Mathematical Sciences,\\
Georgia Southern University,
Statesboro, GA 30460 \\
}
\begin{document}

\maketitle{}

\begin{abstract}
When considering the number of subtrees of trees, the extremal
 structures that maximize this number among binary trees and trees
  with a given maximum degree led to some interesting facts that
   correlate to other graphical indices in applications.
   The number of subtrees in the extremal cases also form a sequence
   of numbers that are studied by number theorists.
   The structures that maximize or minimize the number
   of subtrees among general trees, binary trees and trees
   with a given maximum degree were identified before.
    Most recently, results of this nature are generalized
    to trees with a given degree sequence. In this note,
    we characterize the trees that maximize the number of
    subtrees among trees of given order and degree sequence.
     Instead of using theoretical arguments, we take an algorithmic
      approach that explicitly describes the process of achieving
       an extremal tree from any random tree. The result also
       leads to some interesting questions and provides insights
       on finding the trees close to extremal and their numbers of subtrees.
\end{abstract}

\noindent {\bf 2000 Mathematics Subject Classification:} 05C05, 05C07, 05C35, 05C85

\noindent {\bf Keyword:}
tree, subtrees, extremal

\newpage

\section{Introduction and terminology}

For a \textit{tree} $T=(V,E)$ with vertex set $V(T)$ and
edge set $E(T)$, $d_T(u)$ denotes the {\em degree} of
vertex $u$. $P_T(u,v)$ and $d_T(u,v)$ denote the {\em path}
connecting two vertices $u,v \in V(T)$ and the {\em distance}
between them. The degree sequence of a graph is the multi set
containing the degrees of all non-leaf vertices in descending order.

We denote a tree $T$ \textit{rooted} at $r \in V(T)$ as $(T,r)$.
 Let $h_T(u)=d_T(r,u)$ and $h(T) = \max _{u \in V(T)} h_T(u)$ be
 the \textit{height} of $u$ and $T$. Also let $f(T)$ denote the
  number of subtrees of $T$ and $f^{T}_{u}(X)$ denote the number
  of subtrees containing $u$ in a subgraph $X$ of $T$.
 If $h_T(u)<h_T(v)$ then we say $u$ is an \textit{ancestor}
  of $v$ or $v$ is a \textit{descendant} of $u$.
  If $h_T(u)=h_T(v)-1$ for two vertices $u$ and $v$,
   then we say $u$ is the \textit{parent} of $v$ and $v$ is the \textit{child} of $u$.
    When two vertices share the same parent we call them \textit{siblings}.
  We sometimes
  omit the subscript or superscript if there is no ambiguity.


Suppose that $v_1 x_1 x_2 x_3 \ldots x_n v_2$ is a path in $T$ with $v_1$ and $v_2$
 being leaves. After the removal of all edges on the path there are still connected
 components left, each containing one of $x_1, x_2, \ldots, x_n$.
 Label these components as $X_1, X_2, \ldots, X_n$, respectively. Also,
 let $X_{\leq i}$ ($X_{\geq j}$) be the component containing $x_i$ ($x_j$)
 in $T - x_ix_{i+1}$ ($T - x_{j-1}x_j$). Fig.~\ref{figure2} shows an example of
 such labellings.

\begin{figure}[ht]
\begin{center}
\begin{tikzpicture}

\draw (0,0)--(0.5,0); \draw (1.5,0)--(3.5,0); \draw (4.5,0)--(5,0);
\draw (8,0)--(9.5,0); \draw (10.5,0)--(11,0);

\draw [fill=gray!20] (2,0)--(1.75,0.5)--(2.25,0.5)--(2,0);
\draw [fill=gray!20] (3,0)--(2.75,0.5)--(3.25,0.5)--(3,0);

\draw [fill=gray!20] (8,0)--(7,0.5)--(7,-0.5)--(8,0);
\draw [fill=gray!20] (9,0)--(8.75,0.5)--(9.25,0.5)--(9,0);

\node at (2,0.75) [] {$X_i$};
\node at (3,0.75) [] {$X_{i+1}$};

\node at (7.5,0.75) [] {$X_{\leq i}$};
\node at (9,0.75) [] {$X_{i+1}$};

\node at (0,0) [circle, draw, inner sep=0.5mm, label=below:$v_1$, fill=blue!40!white] {};
\node at (1,0) [] {$\ldots$};
\node at (2,0) [circle, draw, inner sep=0.5mm, label=below:$x_i$, fill=blue!40!white] {};
\node at (3,0) [circle, draw, inner sep=0.5mm, label=below:$x_{i+1}$, fill=blue!40!white] {};
\node at (4,0) [] {$\ldots$};
\node at (5,0) [circle, draw, inner sep=0.5mm, label=below:$v_2$, fill=blue!40!white] {};

\node at (6,0) {$\Longrightarrow$};

\node at (8,0) [circle, draw, inner sep=0.5mm, label=below:$x_i$, fill=blue!40!white] {};
\node at (9,0) [circle, draw, inner sep=0.5mm, label=below:$x_{i+1}$, fill=blue!40!white] {};
\node at (10,0) [] {$\ldots$};
\node at (11,0) [circle, draw, inner sep=0.5mm, label=below:$v_2$, fill=blue!40!white] {};

\end{tikzpicture}
\end{center}
\caption{$X_{\leq i}$ in $T$}
\label{figure2}
\end{figure}
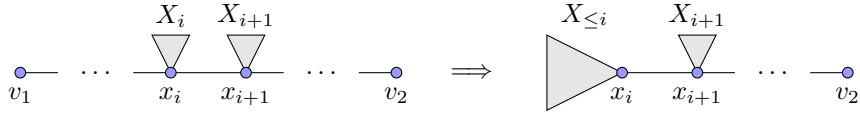

The subtrees of trees have been studied in \cite{binl} and some general
 properties were provided. A nice coincidence was found between the
 binary trees that maximize the number of subtrees and the binary trees
 that minimize the {\em Wiener index} (\cite{wiener}, defined as the sum
 of all pairwise distances between vertices), a chemical index widely
 used in biochemistry. In the same paper, formulas are given to calculate
 the number of subtrees of these extremal binary trees. The sequence of the
number of subtrees of these extremal binary trees are found to be
novel \cite{online}. These formulas use  a new  representation of
integers as a sum of powers of 2. Number theorists have already
started investigating this new binary representation \cite{dafr}.
 The results were extended to trees with a given maximum degree
\cite{kirk} and the extremal structures once again coincide with the
ones found for some
 other topological indices.  Yan and  Yeh \cite{yeh} presented an algorithm for counting
 the number of  subtrees of a tree.   The correlations of different graphical indices
 that share the same extremal structures, including the number of subtrees,
 have also been considered \cite{wagner}. Moreover, there is a relation between
The greedoid Tutte polynomial of a tree and the number of subtrees
\cite{Eisenstata}.  We will examine trees with a given {\em degree
sequence} (the descending sequence of the degrees of non-leaf
vertices).

Suppose we have a tree with given degree sequence. A \textit{greedy tree} can
be constructed in the following way:

\begin{dfn}\label{def21}
Suppose the degrees of non-leaf vertices are given, the greedy tree is achieved
 through the following `greedy algorithm':                                                                                                                   \\
\indent i) Label the vertex with the largest degree as $r$ (the root);                                  \\
\indent ii)Label the children of $r$ as $r_1, r_2,\ldots$, assign the largest
 degrees available to them such that $d(r_1) \geq d(r_2) \geq \ldots $ ;                                               \\
\indent iii) Label the children of $r_1$ as $r_{11},r_{12},\ldots $ such
that $d(r_{11}) \geq d(r_{12}) \geq \ldots $ then do the same for $r_2,r_3,\ldots $
respectively; \\
\indent iv) Repeat (iii) for all the newly labeled vertices, always start
with the children of the labeled vertex with largest degree whose neighbors
are not labeled yet.
\end{dfn}

Fig.~\ref{greedy} shows a greedy tree with degree sequence
$\{ 4, 4, 4, 3,3,3,3,3,3,3,2,2 \}$.

\begin{figure}[ht]
\begin{center}
\setlength{\unitlength}{2.5pt}
\begin{picture}(142,30)
\put(5,8){\circle*{1}}
\put(29,8){\circle*{1}}
\put(17,14){\circle*{1}}
\put(17,14){\line(2,-1){12}}
\put(17,14){\line(-2,-1){12}}
\put(41,8){\circle*{1}}
\put(65,8){\circle*{1}}
\put(53,14){\circle*{1}}
\put(65,8){\line(1,-1){5}}
\put(53,14){\line(2,-1){12}}
\put(53,14){\line(-2,-1){12}}
\put(70,3){\circle*{1}}
\put(72,3){\circle*{1}}
\put(77,8){\circle*{1}}
\put(84,3){\circle*{1}}
\put(94,3){\circle*{1}}
\put(89,8){\circle*{1}}
\put(96,3){\circle*{1}}
\put(106,3){\circle*{1}}
\put(101,8){\circle*{1}}
\put(89,14){\circle*{1}}
\put(77,8){\line(-1,-1){5}}
\put(89,8){\line(1,-1){5}}
\put(89,8){\line(-1,-1){5}}
\put(101,8){\line(1,-1){5}}
\put(101,8){\line(-1,-1){5}}
\put(89,14){\line(2,-1){12}}
\put(89,14){\line(-2,-1){12}}
\put(89,14){\line(0,-1){6}}
\put(108,3){\circle*{1}}
\put(118,3){\circle*{1}}
\put(113,8){\circle*{1}}
\put(120,3){\circle*{1}}
\put(130,3){\circle*{1}}
\put(125,8){\circle*{1}}
\put(132,3){\circle*{1}}
\put(142,3){\circle*{1}}
\put(137,8){\circle*{1}}
\put(125,14){\circle*{1}}
\put(113,8){\line(1,-1){5}}
\put(113,8){\line(-1,-1){5}}
\put(125,8){\line(1,-1){5}}
\put(125,8){\line(-1,-1){5}}
\put(137,8){\line(1,-1){5}}
\put(137,8){\line(-1,-1){5}}
\put(125,14){\line(2,-1){12}}
\put(125,14){\line(-2,-1){12}}
\put(125,14){\line(0,-1){6}}
\put(71,23){\circle*{1}}
\put(71,23){\line(2,-1){18}}
\put(71,23){\line(-2,-1){18}}
\put(71,23){\line(6,-1){54}}
\put(71,23){\line(-6,-1){54}}
\put(70,24){\makebox{$r$}}
\put(12,15){\makebox{$r_4$}}
\put(48,15){\makebox{$r_3$}}
\put(89,15){\makebox{$r_2$}}
\put(125,15){\makebox{$r_1$}}
\put(0,10){\makebox{$r_{42}$}}
\put(29,10){\makebox{$r_{41}$}}
\put(36,10){\makebox{$r_{32}$}}
\put(65,10){\makebox{$r_{31}$}}
\put(72,10){\makebox{$r_{23}$}}
\put(84,10){\makebox{$r_{22}$}}
\put(101,10){\makebox{$r_{21}$}}
\put(108,10){\makebox{$r_{13}$}}
\put(120,10){\makebox{$r_{12}$}}
\put(137,10){\makebox{$r_{11}$}}
\end{picture}
\caption{A greedy tree}
\label{greedy}
\end{center}
\end{figure}
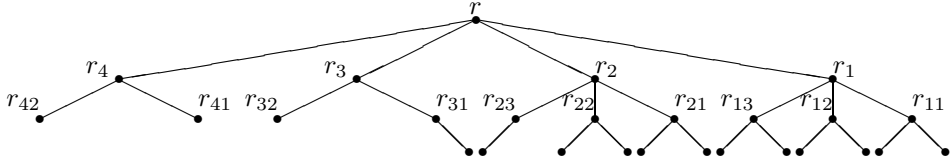

The greedy tree is shown to maximize the number of subtrees \cite{deg}.
It is nice to note that the greedy tree once again coincides with the extremal
 structures of other graphical indices. In this note, we will employ an
 algorithmic approach that defines the exact operations needed to achieve
 the extremal tree from any tree and vice versa. With this approach,
 it is possible to identify trees that are `close' to extremal and study their
 numbers of subtrees. In Section 2 we introduce several `switching operations'
 and algorithms that increase the number of subtrees in every step. In Section 3
 we prove the main result and discuss the second or $k$th extremal tree in general.
 A summary and some questions/conjectures are posted in Section 4.

\section{Algorithms on switching components}

For the following definitions, we consider a path $v_1x_1x_2 \ldots x_nv_2$
labeled as in Fig.~\ref{figure2}.

\begin{dfn}\label{def60}
A `component-switch', denoted by $S_{v_1,v_2}^T(X_i,X_j)$ (Fig.~\ref{fig-comp-switch}),
is to interchange $X_i$ and $X_j$.
\end{dfn}

\begin{figure}[ht]
\begin{center}
\begin{tikzpicture}

\draw (1,0)--(4,0);

\draw [fill=gray!20] (1,0)--(0,0.5)--(0,-0.5)--(1,0);
\draw [fill=blue!60] (2,0)--(1.75,0.5)--(2.25,0.5)--(2,0);
\draw [fill=blue!20] (3,0)--(2.75,0.5)--(3.25,0.5)--(3,0);
\draw [fill=gray!60] (4,0)--(5,0.5)--(5,-0.5)--(4,0);

\node at (-1,0) [] {$X_{\leq k-1}$};
\node at (2,0.75) [] {$X_k$};
\node at (3,0.75) [] {$X_{k+1}$};
\node at (6,0) [] {$X_{\geq k+2}$};

\node at (1,0) [circle, draw, inner sep=0.5mm, label=below:$x_{k-1}$, fill=blue!40!white] {};
\node at (2,0) [circle, draw, inner sep=0.5mm, label=below:$x_k$, fill=blue!40!white] {};
\node at (3,0) [circle, draw, inner sep=0.5mm, label=below:$x_{k+1}$, fill=blue!40!white] {};
\node at (4,0) [circle, draw, inner sep=0.5mm, label=below:$x_{k+2}$, fill=blue!40!white] {};

\draw (1,-2)--(4,-2);

\draw [fill=gray!20] (1,-2)--(0,-1.5)--(0,-2.5)--(1,-2);
\draw [fill=blue!20] (2,-2)--(1.75,-1.5)--(2.25,-1.5)--(2,-2);
\draw [fill=blue!60] (3,-2)--(2.75,-1.5)--(3.25,-1.5)--(3,-2);
\draw [fill=gray!60] (4,-2)--(5,-1.5)--(5,-2.5)--(4,-2);

\node at (-1,-2) [] {$X_{\leq k-1}$};
\node at (2,-1.25) [] {$X_{k+1}$};
\node at (3,-1.25) [] {$X_{k}$};
\node at (6,-2) [] {$X_{\geq k+2}$};

\node at (1,-2) [circle, draw, inner sep=0.5mm, label=below:$x_{k-1}$, fill=blue!40!white] {};
\node at (2,-2) [circle, draw, inner sep=0.5mm, label=below:$x_k$, fill=blue!40!white] {};
\node at (3,-2) [circle, draw, inner sep=0.5mm, label=below:$x_{k+1}$, fill=blue!40!white] {};
\node at (4,-2) [circle, draw, inner sep=0.5mm, label=below:$x_{k+2}$, fill=blue!40!white] {};

\end{tikzpicture}
\end{center}\caption{$T$ (on top), before switching, and $S$ (at bottom), after switching}
\label{fig-comp-switch}
\end{figure}
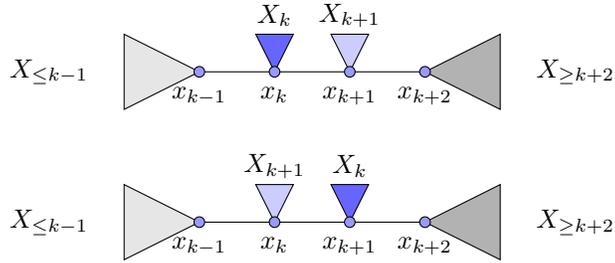

\begin{rmk}
For the purpose of better illustrating the idea, in most of the figures
we use `darker' color for `larger' component in the sense of having larger
values for $f_u^T(X)$ (for the corresponding $u$ and $X$).
\end{rmk}

\begin{dfn}
A `tail-switch', denoted by $S_{v_1,v_2}^T(X_{\leq i},X_{\geq j})$, is to
switch the components $X_{\leq i}$ and $X_{\geq j}$.
\end{dfn}

\begin{dfn}\label{dfn-degree-switch}
Suppose that $p=d_T(x_l) < d_T(x_k)=q$ in Fig.~\ref{figure-degree-switch}.
Let $x_{k,i}$ ($i=1,2, \ldots, q-2$) denote the neighbors of $x_k$ that are
 not on $P_T(v_1, v_2)$. Let $X_{k,1}, X_{k,2}, \ldots, X_{k,q-2}$ denote the
  corresponding components, ordered from smallest to largest according to the
   value of $f_{x_{k,i}}^T (X_{k,i})$. A `degree-switch'
    $S_{v_1,v_2}^T(\emptyset_{x_l},X_k''):=R_{v_1,v_2}^T(x_l,x_k)$ is to move
    the largest $q-p$ components (denoted by $X_k''$ in Fig.~\ref{figure-degree-switch})
    in $X_k$ from $x_k$ to $x_l$.
\end{dfn}

\begin{figure}[ht]
\begin{center}
\begin{tikzpicture}

\draw (1.5,0)--(2,0)--(4,0);
\draw (6,0)--(8,0)--(8.5,0);
\draw (5,-2)--(8,0); \draw (7,-2)--(8,0); \draw (8.5,-2)--(8,0); \draw (10.5,-2)--(8,0);

\draw (2,0)--(2.5,-1)--(1.5,-1)--(2,0);
\draw (8,0)--(7.7,-3)--(3.7,-2.5)--(8,0);
\draw (8,0)--(7.7,-3)--(12.5,-2.5)--(8,0);

\node at (1,0) [] {$\ldots$};
\node at (2,0) [draw, circle, label=above:$x_l$, fill=green!40] {};
\node at (5,0) [] {$\ldots$};
\node at (8,0) [draw, circle, label=above:$x_k$, fill=green!40] {};
\node at (9,0) [] {$\ldots$};

\node at (5,-2) [draw, circle, inner sep=0.5mm, label=below:$X_{k,1}$, fill=green!40] {};
\node at (6.4,-1.5) [] {$\ldots$};
\node at (7,-2) [draw, circle, inner sep=0.5mm, label=below:$X_{k,p-2}$, fill=green!40] {};
\node at (8.5,-2) [draw, circle, inner sep=0.5mm, label=below:$X_{k,p-1}$, fill=green!40] {};
\node at (9.1,-1.5) [] {$\ldots$};
\node at (10.5,-2) [draw, circle, inner sep=0.5mm, label=below:$X_{k,q-2}$, fill=green!40] {};
\node at (2,-1.5) [] {$X_l$};
\node at (6.4,-3.5) [] {$X_k'$};
\node at (9.1,-3.5) [] {$X_k''$};

\end{tikzpicture}
\end{center}
\caption{A `degree switch' when $p=d(x_l) < d(x_k)=q$}
\label{figure-degree-switch}
\end{figure}
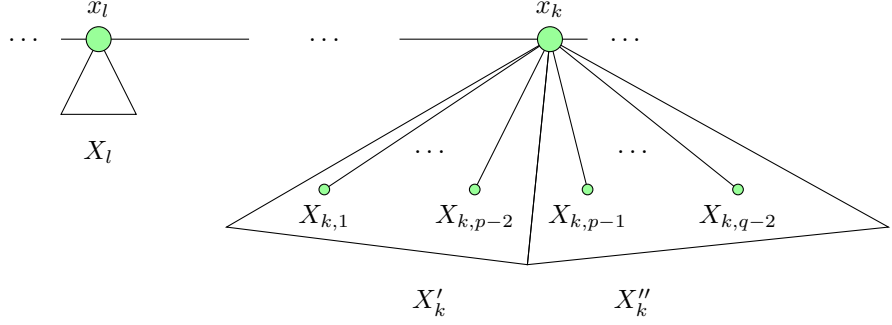

\subsection{Phase I of the switching algorithm}

For convenience let $C_k^T$, $C_{\leq k}^T$ and $C_{\geq k}^T$ denote
$f^{T}_{x_k}(X_k)$, $f^{T}_{x_{k}}(X_{\leq k})$ and   $f^{T}_{x_{k}}(X_{\geq k})$
respectively. Then $C_{\leq 0} = C_{\geq n+1} =1$.

\begin{lemma}\label{lemma-component-switch}
If $C_{\leq k-1}^T < C_{\geq k+2}^T$ and $C_k^T > C_{k+1}^T$, then performing
$S_{v_1,v_2}^T(X_k,X_{k+1})$ will increase the number of subtrees.
\end{lemma}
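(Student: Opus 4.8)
The plan is to show directly that the switch changes the subtree count by a single product of two differences whose signs are pinned down by the hypotheses. First I would record two recursions that follow by conditioning on whether a root-containing subtree crosses the adjacent path edge: $C_{\leq k}^T = C_k^T(1 + C_{\leq k-1}^T)$ and $C_{\geq k}^T = C_k^T(1 + C_{\geq k+1}^T)$, consistent with the stated boundary values $C_{\leq 0}^T = C_{\geq n+1}^T = 1$. These let me trade the cumulative quantities for the local ones $C_k^T$ and $C_{k+1}^T$, which are exactly what the switch moves.

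Next I would decompose $f(T)$ by cutting the path at the edge $x_k x_{k+1}$. Every subtree either contains this edge --- contributing $C_{\leq k}^T C_{\geq k+1}^T$, by choosing independently a subtree of $X_{\leq k}$ through $x_k$ and one of $X_{\geq k+1}$ through $x_{k+1}$ --- or lies entirely in one of the two components $X_{\leq k}$, $X_{\geq k+1}$ of $T - x_k x_{k+1}$. Hence $f(T) = C_{\leq k}^T C_{\geq k+1}^T + f(X_{\leq k}) + f(X_{\geq k+1})$. I would then peel off one further edge on each side, writing $f(X_{\leq k}) = f(X_{\leq k-1}) + f(X_k) + C_{\leq k-1}^T C_k^T$ and, symmetrically, $f(X_{\geq k+1}) = f(X_{\geq k+2}) + f(X_{k+1}) + C_{k+1}^T C_{\geq k+2}^T$, so that the dependence on $C_k^T$ and $C_{k+1}^T$ becomes fully explicit.

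The crux is to track what the component-switch does to each piece. Since it only relocates the branches hanging at $x_k$ and $x_{k+1}$, it swaps $C_k^T \leftrightarrow C_{k+1}^T$ and interchanges the summands $f(X_k)$ and $f(X_{k+1})$, while leaving $C_{\leq k-1}^T$, $C_{\geq k+2}^T$, $f(X_{\leq k-1})$, $f(X_{\geq k+2})$ untouched. Using the recursions, the crossing term becomes $C_{\leq k}^T C_{\geq k+1}^T = C_k^T C_{k+1}^T (1 + C_{\leq k-1}^T)(1 + C_{\geq k+2}^T)$, which is symmetric in $C_k^T$ and $C_{k+1}^T$ and therefore invariant under the switch; the pair $f(X_k) + f(X_{k+1})$ is likewise invariant. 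After these cancellations the only surviving change comes from the two linear terms $C_{\leq k-1}^T C_k^T$ and $C_{k+1}^T C_{\geq k+2}^T$, yielding $f(S) - f(T) = (C_k^T - C_{k+1}^T)(C_{\geq k+2}^T - C_{\leq k-1}^T)$.

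Finally I would invoke the hypotheses: $C_k^T > C_{k+1}^T$ makes the first factor positive and $C_{\leq k-1}^T < C_{\geq k+2}^T$ makes the second positive, so the product is strictly positive and the switch increases $f$. I expect the main obstacle to be purely the bookkeeping of the middle step --- ensuring each subtree is counted exactly once, and, above all, recognizing which aggregated terms are symmetric (hence cancel) under the switch. The safest route is to expand $f(T)$ completely into the four base quantities $C_k^T, C_{k+1}^T, C_{\leq k-1}^T, C_{\geq k+2}^T$ together with the manifestly switch-invariant sub-counts, and only then compare term by term with the same expansion of $f(S)$.
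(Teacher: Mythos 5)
Your proposal is correct and follows essentially the same route as the paper: both arguments reduce everything to the identity $f(S)-f(T)=(C_k^T-C_{k+1}^T)(C_{\geq k+2}^T-C_{\leq k-1}^T)$ via the multiplicative formulas $C_{\leq k}^T=C_k^T(1+C_{\leq k-1}^T)$ and $C_{\geq k+1}^T=C_{k+1}^T(1+C_{\geq k+2}^T)$. The paper phrases the cancellation as ``subtrees containing both or neither of $x_k,x_{k+1}$ are unaffected'' and compares only the remaining counts, whereas you make the same cancellation explicit by expanding $f$ through successive edge deletions; the content is identical.
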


\begin{proof} Let $T$ and $S$ denote the trees before and after switching respectively
 as in Fig.~\ref{fig-comp-switch}.

Note that from $T$ to $S$, the numbers of subtrees that contain both or neither
$x_k$ and $x_{k+1}$ stay the same. So we only need to consider the number of
subtrees containing exactly one of $x_k$ and $x_{k+1}$.

In $T$, the number of subtrees containing $x_k$ but not $x_{k+1}$ is
$$ C_{\leq k}^T = C_{k}^T(1+C_{\leq k-1}^T) $$
and the number of subtrees containing $x_{k+1}$ but not $x_k$ is
$$ C_{\geq k+1}^T = C_{k+1}^T(1+C_{\geq k+2}^T) . $$

Similarly, these two numbers in $S$ are
$C_{k+1}^T(1+C_{\leq k-1}^T)$ and $C_{k}^T(1+C_{\geq k+2}^T)$.
Consequently, we have
$$ {f(S)-f(T)}  = (C_k^T-C_{k+1}^T)(C_{\geq k+2}^T-C_{\leq k-1}^T)  >  0 . $$

\end{proof}

The following is a result of ``applying'' bubble sort algorithm with component-switches.

\begin{algorithm}\label{component-switch}
\textbf{(Phase I Switching Algorithm)}.
For a tree $T$ and leaves $v_1, v_2 \in V(T)$. Let \textbf{Label 1} be the
labeling of the path from $v_1$ to $v_2$ denoted by $v_1 x_1 x_2 \ldots x_n v_2$,
and let \textbf{Label 2} be a re-labeling of the path from $v_2$ to $v_1$ given by
 $v_2 x_1 x_2 \ldots x_n v_1$. $S = P_1^T(v_1,v_2)$ is given below.
\textnormal{\texttt{
\begin{algorithmic}
\STATE $S = (\emptyset, \emptyset)$         \hfill \COMMENT{\textnormal{\small{Empty graph}}}
\STATE $m = 0$
\STATE $T_m = T$
\WHILE{$S \neq T_m$}
    \STATE $k = 1$
    \STATE $S = T_m$        \hfill \COMMENT{\textnormal{\small{Terminate program
     if no change in $T_m$}}}
    \STATE Label $x_i$'s according to \textbf{Label 1}.
    \WHILE{$C_{\leq k-1}^{T_m} < C_{\geq k+2}^{T_m}$}
        \IF{$C_k^{T_m} > C_{k+1}^{T_m}$}
        \STATE \hfill \COMMENT{\textnormal{\small{Conditions of
         Lemma~\ref{lemma-component-switch}}}}
            \STATE $m=m+1$  \hfill \COMMENT{\textnormal{\small{Update only
            if an increase will occur}}}
            \STATE $T_m = S_{v_1,v_2}^{T_{m-1}}(X_k,X_{k+1})$
                \hfill \COMMENT{\textnormal{\small{Increases \# of subtrees}}}
            \STATE $k = k+1$
        \ELSE
            \STATE $k = k+1$
        \ENDIF
    \ENDWHILE
    \STATE $k = 1$
    \STATE Relabel $x_i$'s according to \textbf{Label 2}.
      \hfill \COMMENT{\textnormal{\small{Repeat for \textbf{Label 2}}}}
    \WHILE{$C_{\leq k-1}^{T_m} < C_{\geq k+2}^{T_m}$}
        \IF{$C_k^{T_m} > C_{k+1}^{T_m}$}
            \STATE $m=m+1$
            \STATE $T_m = S_{v_2,v_1}^{T_{m-1}}(X_k,X_{k+1})$
            \STATE $k = k+1$
        \ELSE
            \STATE $k = k+1$
        \ENDIF
    \ENDWHILE
\ENDWHILE
\end{algorithmic}
}}
\end{algorithm}

In algorithm~\ref{component-switch}, $T_m$ updates if and only if
 there is an increase in the number of subtrees. Hence the algorithm must
 terminate at some point. Simple calculations shows the following in a tree
  resulted from the Phase I algorithm.

\begin{cor}\label{cor-phi}
Let a path from $v_1$ to $v_2$ be sorted by the Phase I algorithm. Relabel
the vertices such that $x_1$ is the vertex on the path with the largest component,
 and label the rest of the vertices as in Fig.~\ref{new-path}. Without loss of generality,
  let $y_1$ and $x_2$ be labeled so that $D^T_1 \geq C^T_2$ where $C^T_i=f^{T}_{x_i}(X_i)$
   and $D^T_k=f^{T}_{y_k}(Y_k)$. Then
$$ C_1^T \geq C_2^T \geq \ldots \hbox{ and } D_1^T \geq D_2^T \geq \ldots .  $$
\end{cor}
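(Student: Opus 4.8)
The plan is to read structural constraints off the single fact that the Phase~I algorithm has \emph{terminated}, and then show those constraints force the claimed monotonicity. I would work throughout with the cumulative quantities $C^T_{\leq k}$ and $C^T_{\geq k}$ on the already-sorted path. The first step is to record how these grow. From the identity $C^T_{\leq k}=C^T_k(1+C^T_{\leq k-1})$ established in the proof of Lemma~\ref{lemma-component-switch}, together with $C^T_k\geq 1$ (the single vertex $x_k$ is itself a subtree of $X_k$), one gets $C^T_{\leq k}>C^T_{\leq k-1}$, and symmetrically $C^T_{\geq k}>C^T_{\geq k+1}$. Hence, as the gap between two consecutive path vertices slides from the $v_1$ end toward the $v_2$ end, the difference $C^T_{\leq k-1}-C^T_{\geq k+2}$ is \emph{strictly increasing}. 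In particular there is a single threshold: $C^T_{\leq k-1}<C^T_{\geq k+2}$ to the left of it, $C^T_{\leq k-1}>C^T_{\geq k+2}$ to the right of it, and at most one gap where equality holds.

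Next I would translate termination into inequalities on adjacent components. Since no switch occurs in the terminal pass, the \textbf{Label~1} scan, whose guard $C^T_{\leq k-1}<C^T_{\geq k+2}$ makes it run over exactly the gaps left of the threshold, shows $C^T_k\leq C^T_{k+1}$ at each such gap; the \textbf{Label~2} scan is the mirror image and, reading the path from $v_2$, runs over exactly the gaps right of the threshold and forces $C^T_k\geq C^T_{k+1}$ there. Combining this with the monotone threshold from the previous step, the sequence $C^T_1,C^T_2,\ldots,C^T_n$ is nondecreasing up to the threshold and nonincreasing afterward, i.e. it is unimodal with its peak at the crossover gap. Relabeling the peak vertex as $x_1$ and its two sides as the $x$-arm (toward one leaf) and the $y$-arm (toward the other) then produces precisely two chains that decrease as one moves away from $x_1$, which is the assertion $C^T_1\geq C^T_2\geq\cdots$ and $D^T_1\geq D^T_2\geq\cdots$.

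The one delicate point, and the step I expect to be the main obstacle, is the crossover gap itself when $C^T_{\leq k-1}=C^T_{\geq k+2}$: neither scan visits it (both guards are strict), so termination imposes no order on the two components straddling it. Here I would invoke the exact difference $f(S)-f(T)=(C^T_k-C^T_{k+1})(C^T_{\geq k+2}-C^T_{\leq k-1})$ from Lemma~\ref{lemma-component-switch}: when the second factor vanishes, interchanging those two components changes nothing, so I may assume without loss of generality that the larger of them is placed adjacent to the peak. This both pins the maximum at a single well-defined center $x_1$ and fixes which neighbor is the larger, which is exactly the normalization recorded in the statement as $D^T_1\geq C^T_2$. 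With this resolved, each arm inherits strict threshold behavior gap-by-gap and the two nonincreasing chains follow; what remains is only routine index bookkeeping in passing between the two labelings.
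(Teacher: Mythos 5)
Your argument is correct, and it supplies in full the reasoning that the paper compresses into the single phrase ``Simple calculations shows the following'': termination of the terminal pass forces $C^T_k\leq C^T_{k+1}$ on the gaps scanned under \textbf{Label 1} and $C^T_k\geq C^T_{k+1}$ on those scanned under \textbf{Label 2}, and the strict monotonicity of $C^T_{\leq k-1}-C^T_{\geq k+2}$ (from $C^T_{\leq k}=C^T_k(1+C^T_{\leq k-1})$ and its mirror) makes these two families an initial and a final segment of gaps meeting at a single crossover, giving unimodality. Your treatment of the crossover gap is slightly more elaborate than needed --- since each arm is already monotone toward that gap, the global maximum automatically sits at one of the two vertices straddling it, so no WLOG interchange is required, only the choice of that vertex as $x_1$ --- but this is a matter of economy, not correctness.
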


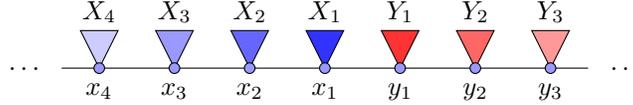
\begin{figure}[ht]
\begin{center}
\begin{tikzpicture}

\draw (-0.5,0)--(6.5,0);

\draw [fill=blue!20!white] (0,0)--(0.25,0.5)--(-0.25,0.5)--(0,0);
\draw [fill=blue!40!white] (1,0)--(1.25,0.5)--(0.75,0.5)--(1,0);
\draw [fill=blue!60!white] (2,0)--(2.25,0.5)--(1.75,0.5)--(2,0);
\draw [fill=blue!80!white] (3,0)--(3.25,0.5)--(2.75,0.5)--(3,0);
\draw [fill=red!80!white] (4,0)--(4.25,0.5)--(3.75,0.5)--(4,0);
\draw [fill=red!60!white] (5,0)--(5.25,0.5)--(4.75,0.5)--(5,0);
\draw [fill=red!40!white] (6,0)--(6.25,0.5)--(5.75,0.5)--(6,0);

\node at (0,0.75) [] {$X_4$};
\node at (1,0.75) [] {$X_3$};
\node at (2,0.75) [] {$X_2$};
\node at (3,0.75) [] {$X_1$};
\node at (4,0.75) [] {$Y_1$};
\node at (5,0.75) [] {$Y_2$};
\node at (6,0.75) [] {$Y_3$};

\node at (-1,0) [] {$\ldots$};
\node at (0,0) [circle, draw, inner sep=0.5mm, label=below:$x_{4}$, fill=blue!40!white] {};
\node at (1,0) [circle, draw, inner sep=0.5mm, label=below:$x_{3}$, fill=blue!40!white] {};
\node at (2,0) [circle, draw, inner sep=0.5mm, label=below:$x_{2}$, fill=blue!40!white] {};
\node at (3,0) [circle, draw, inner sep=0.5mm, label=below:$x_{1}$, fill=blue!40!white] {};
\node at (4,0) [circle, draw, inner sep=0.5mm, label=below:$y_{1}$, fill=blue!40!white] {};
\node at (5,0) [circle, draw, inner sep=0.5mm, label=below:$y_{2}$, fill=blue!40!white] {};
\node at (6,0) [circle, draw, inner sep=0.5mm, label=below:$y_{3}$, fill=blue!40!white] {};
\node at (7,0) [] {$\ldots$};

\end{tikzpicture}
\end{center}
\caption{Re-labeling of path after the Phase I algorithm}
\label{new-path}
\end{figure}

\subsection{Phase II of the switching algorithm}

For the lemmas that follow, we assume that our path is sorted by the Phase
 I algorithm and labeled as in Fig.~\ref{new-path}.
Firstly, the reader can easily verify the following technical observation.

\begin{lemma}\label{lemma-new}
If $C_1^T \geq D_1^T \geq C_2^T \geq \ldots \geq C_{k-1}^T \geq D_{k-1}^T$, let
$$ C:= \sum_{i=1}^{k-1} \prod_{j=1}^{i} C_{k-j}^T +
 C_{k-1}^T \ldots C_1^T \sum_{i=1}^{k-1} \prod_{j=1}^i D_j^T $$ and
$$ D:= \sum_{i=1}^{k-1} \prod_{j=1}^{i} D_{k-j}^T +
D_{k-1}^T \ldots D_1^T \sum_{i=1}^{k-1} \prod_{j=1}^i C_j^T,   $$

then $C \geq D$.

Similarly, if $C_1^T \geq D_1^T \geq C_2^T \geq \ldots \geq C_{k-1}^T
 \geq D_{k-1}^T \geq C_{k}^T$, let
$$ C' := \sum_{i=0}^{k-1} \prod_{j=0}^{i} C_{k-j}^T +
C_{k}^T \ldots C_1^T \sum_{i=1}^{k-1} \prod_{j=1}^i D_j^T $$ and
$$ D' := \sum_{i=1}^{k-1} \prod_{j=1}^{i} D_{k-j}^T +
D_{k-1}^T\ldots D_1^T \sum_{i=1}^{k} \prod_{j=1}^i C_j^T,    $$
then $D' \geq C'$.
\end{lemma}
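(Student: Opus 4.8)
The plan is to prove both inequalities by the same mechanism: rewrite each side as a sum over the ``insertion position'' of one distinguished component and compare term-by-term after pairing up corresponding summands. First I would look carefully at the structure of $C$ and $D$. Each consists of two pieces: a ``prefix'' sum of the form $\sum_{i=1}^{k-1}\prod_{j=1}^{i}C_{k-j}^T$ (respectively with $D$) that counts suffix products of the $C$-sequence read inward from index $k-1$, plus a ``cross'' term $C_{k-1}^T\cdots C_1^T\sum_{i=1}^{k-1}\prod_{j=1}^{i}D_j^T$ in which the full product $C_{k-1}^T\cdots C_1^T$ multiplies a prefix sum over the $D$-sequence. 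The difference $C-D$ therefore splits as
\begin{equation*}
C-D=\Bigl(\sum_{i=1}^{k-1}\prod_{j=1}^{i}C_{k-j}^T-\sum_{i=1}^{k-1}\prod_{j=1}^{i}D_{k-j}^T\Bigr)+\Bigl(\textstyle\prod_{j=1}^{k-1}C_j^T\sum_{i=1}^{k-1}\prod_{j=1}^{i}D_j^T-\prod_{j=1}^{k-1}D_j^T\sum_{i=1}^{k-1}\prod_{j=1}^{i}C_j^T\Bigr).
\end{equation*}

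The first parenthesized quantity is manifestly nonnegative, because the chain hypothesis $C_1^T\ge D_1^T\ge C_2^T\ge\cdots\ge C_{k-1}^T\ge D_{k-1}^T$ forces $C_{\ell}^T\ge D_{\ell}^T$ for every $\ell$, so each summand $\prod_{j=1}^{i}C_{k-j}^T$ dominates the matching $\prod_{j=1}^{i}D_{k-j}^T$ factor-by-factor. The real work is the second bracket. I would factor out $\prod_j C_j^T\prod_j D_j^T$ where possible and reduce the claim to showing
\begin{equation*}
\sum_{i=1}^{k-1}\Bigl(\textstyle\prod_{j=i+1}^{k-1}C_j^T\cdot\prod_{j=1}^{i}D_j^T-\prod_{j=i+1}^{k-1}D_j^T\cdot\prod_{j=1}^{i}C_j^T\Bigr)\ge 0,
\end{equation*}
i.e.\ at cut position $i$ the high indices carry $C$'s and the low indices carry $D$'s is the ``right'' assignment. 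Each individual summand here need not be signed; the point is that the \emph{aggregate} is nonnegative. The natural tool is an Abel/telescoping rearrangement: pair the $i$ term with the $k-1-i$ term (or compare consecutive cut positions) and show that swapping a $C_\ell^T$ for the corresponding $D_\ell^T$ across the cut only helps, using $C_\ell^T\ge D_\ell^T\ge C_{\ell+1}^T$ at each step.

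The main obstacle, as I see it, is precisely this second bracket: unlike the first, it is not termwise nonnegative, so a crude comparison fails and one must exploit the full interlacing $C_1^T\ge D_1^T\ge C_2^T\ge\cdots$ rather than merely $C_\ell^T\ge D_\ell^T$. I expect the cleanest route is induction on $k$: peeling off the outermost factors $C_1^T,D_1^T$ should reduce the $k$-case inequality $C\ge D$ to the $(k-1)$-case, with the interlacing hypothesis shrinking by one index at each step; the base case $k=2$ is a direct two-line computation of the form $(C_1^T-D_1^T)(\text{something}\ge 0)$. Once $C\ge D$ is established this way, the second statement $D'\ge C'$ follows by the same argument applied to the extended chain $C_1^T\ge D_1^T\ge\cdots\ge D_{k-1}^T\ge C_k^T$: the extra factor $C_k^T$ sits at the bottom of the ordering, so appending it to the $C$-products (which is exactly what distinguishes the primed quantities from the unprimed ones) reverses the roles and flips the inequality. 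I would verify that the primed expressions are obtained from the unprimed ones by this single controlled modification and then invoke the already-proved monotonicity, rather than redoing the induction from scratch.
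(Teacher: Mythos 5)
The paper offers no proof of this lemma at all (it is introduced as a ``technical observation'' the reader can easily verify), so the only question is whether your argument stands on its own. It does not, because the one concrete algebraic step you commit to is wrong. Your splitting of $C-D$ into a ``prefix'' bracket and a ``cross'' bracket is the right start, and the first bracket is indeed termwise nonnegative. But your reduction of the second bracket is not a valid factorization: in the $i$-th difference $\prod_{j=1}^{k-1}C_j^T\prod_{j=1}^iD_j^T-\prod_{j=1}^{k-1}D_j^T\prod_{j=1}^iC_j^T$ you have in effect divided the two terms by different quantities, and the statement you ``reduce to'' is false. Take $k=3$ with $C_1^T=10$, $D_1^T=2$, $C_2^T=2$, $D_2^T=1$: your displayed sum is $(C_2^TD_1^T-D_2^TC_1^T)+(D_1^TD_2^T-C_1^TC_2^T)=-6-18=-24<0$, while the actual second bracket equals $+20$. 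The correct common factor of the $i$-th difference is $\prod_{j=1}^iC_j^TD_j^T$, which leaves $\prod_{j=i+1}^{k-1}C_j^T-\prod_{j=i+1}^{k-1}D_j^T\ge 0$; so the second bracket is termwise nonnegative and no Abel summation, pairing of cut positions, or induction is needed. Gesturing at those tools without executing them does not close the gap that the faulty reduction creates.

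The second inequality $D'\ge C'$ is also not a mere ``role reversal'': the primed sums have unequal numbers of terms ($k$ versus $k-1$), and the proof requires the shifted pairing $D_j^T\ge C_{j+1}^T$. One matches $D_{k-1}^T\cdots D_{k-i}^T$ against $C_k^T\cdots C_{k-i+1}^T$ in the first bracket and $D_1^T\cdots D_{k-1}^T\,C_1^T\cdots C_{i+1}^T$ against $C_1^T\cdots C_k^T\,D_1^T\cdots D_i^T$ in the second; the two unmatched leftover terms, $-C_1^T\cdots C_k^T$ and $+C_1^T\,D_1^T\cdots D_{k-1}^T$, then combine to something nonnegative because $D_1^T\cdots D_{k-1}^T\ge C_2^T\cdots C_k^T$. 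None of this appears in your sketch. In short: correct opening decomposition, but the pivotal computation is incorrect and the remainder is a plan rather than a proof.
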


\begin{rmk}
Here $C$ and $D$ ($C'$ and $D'$) are the numbers of subtrees containing exactly
 one of $x_{k-1}$ and $y_{k-1}$ in the tree induced by the unions of $X_i$ and
  $Y_i$ ($i=1,2, \ldots , k-1$). Note that we have strict inequalities in both cases
   of Lemma~\ref{lemma-new} if
\begin{equation}\label{eq-new}
C_j > D_j \hbox{ } (D_j > C_{j+1}) \hbox{ for some $j$}.
\end{equation}
{\bf For the simplicity of statements, we  assume (\ref{eq-new}) in Lemmas~\ref{lemma63},
 \ref{lemma64}, \ref{lemma-degree-switch}}. Note that in the case of $C_j = D_j$ or
  $D_j = C_{j+1}$ for every $j$, we can simply relabel the vertices/components
  while keeping the same tree.
\end{rmk}

\begin{lemma}\label{lemma63}
If $C_1^T \geq D_1^T \geq C_2^T \geq \ldots \geq C_{k-1}^T \geq D_{k-1}^T$ and
$D_{\geq k}^T > C_{\geq k}^T$ for some $k$, then performing
 $S_{v_1,v_2}^T(X_{\geq k},Y_{\geq k})$
will increase the number of subtrees.

If $C_1^T \geq D_1^T \geq C_2^T \geq \ldots \geq D_{k-1}^T \geq C_k^T$ and
 $C_{\geq k+1}^T > D_{\geq k}^T$, then performing $S_{v_1,v_2}^T(X_{\geq K+1},Y_{\geq k})$
 will increase the number of subtrees.
\end{lemma}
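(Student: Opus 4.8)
My plan is to imitate the proof of Lemma~\ref{lemma-component-switch}: isolate the subtrees whose count actually changes under the tail-switch, and write the net change as a product of two differences whose signs I can control, invoking Lemma~\ref{lemma-new} to sign the harder factor.

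For the first statement, I would begin by setting up notation. The operation $S_{v_1,v_2}^T(X_{\ge k},Y_{\ge k})$ severs exactly two path edges, $x_{k-1}x_k$ and $y_{k-1}y_k$, breaking $T$ into three pieces: the left tail $X_{\ge k}$ rooted at $x_k$ (with $f_{x_k}^T(X_{\ge k})=C_{\ge k}^T$ subtrees through $x_k$), the right tail $Y_{\ge k}$ rooted at $y_k$ (with $D_{\ge k}^T$ subtrees through $y_k$), and the central tree $A$ formed by $X_1,\dots,X_{k-1}$ and $Y_1,\dots,Y_{k-1}$ together with the path segment $x_{k-1}\cdots x_1 y_1\cdots y_{k-1}$. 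In $S$ the two tails are reattached with their roles swapped, so $Y_{\ge k}$ now hangs at $x_{k-1}$ and $X_{\ge k}$ at $y_{k-1}$.

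Next I would classify each subtree by which of the two severed edges it uses. A subtree using neither lies entirely in $A$, in $X_{\ge k}$, or in $Y_{\ge k}$, and such subtrees are in a count-preserving bijection between $T$ and $S$. A subtree using both severed edges contributes $\gamma\,C_{\ge k}^T D_{\ge k}^T$ in each of $T$ and $S$, where $\gamma$ is the number of subtrees of $A$ containing both $x_{k-1}$ and $y_{k-1}$; these cancel as well. Hence only the one-edge subtrees matter. Letting $C$ and $D$ denote the numbers of subtrees of $A$ through $x_{k-1}$ and through $y_{k-1}$ respectively (these are exactly the quantities of Lemma~\ref{lemma-new}, the extra summands in its formulas accounting for central subtrees that reach all the way to the opposite port), a direct count gives the one-edge totals $C\,C_{\ge k}^T+D\,D_{\ge k}^T$ in $T$ and $C\,D_{\ge k}^T+D\,C_{\ge k}^T$ in $S$, so that $f(S)-f(T)=(C-D)(D_{\ge k}^T-C_{\ge k}^T)$. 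Both factors are positive: $D_{\ge k}^T>C_{\ge k}^T$ is the hypothesis, and $C\ge D$ is the first inequality of Lemma~\ref{lemma-new}, with strictness supplied by assumption~(\ref{eq-new}).

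The second statement is handled identically after shifting the cut: $S_{v_1,v_2}^T(X_{\ge k+1},Y_{\ge k})$ severs $x_kx_{k+1}$ and $y_{k-1}y_k$, so the central tree now has ports $x_k$ and $y_{k-1}$, the left tail contributes $C_{\ge k+1}^T$ and the right tail $D_{\ge k}^T$. The same bookkeeping yields $f(S)-f(T)=(C'-D')(D_{\ge k}^T-C_{\ge k+1}^T)$, where $C'$ and $D'$ count subtrees of the central tree through $x_k$ and through $y_{k-1}$. Here the tail difference $D_{\ge k}^T-C_{\ge k+1}^T$ is negative by hypothesis, but the second inequality of Lemma~\ref{lemma-new} gives $D'\ge C'$, i.e. $C'-D'\le 0$, so the product is again positive. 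The main obstacle I anticipate is not any single computation but the middle bookkeeping step: one must check carefully that the ``neither edge'' and ``both edges'' classes cancel, and—more delicately—that the one-edge central counts coincide exactly with $C,D$ (resp. $C',D'$) of Lemma~\ref{lemma-new}, so its inequalities apply verbatim. Matching the index ranges in those sums, in particular allowing a central subtree to run all the way to the opposite port, is where care is needed.
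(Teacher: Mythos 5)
Your proposal is correct and follows essentially the same route as the paper's proof: isolate the subtrees whose count changes under the tail-switch, write $f(S)-f(T)$ as the product $(C-D)\bigl(D_{\geq k}^T-C_{\geq k}^T\bigr)$ (respectively $(C'-D')\bigl(D_{\geq k}^T-C_{\geq k+1}^T\bigr)$), and sign the central factor via Lemma~\ref{lemma-new} with strictness from (\ref{eq-new}). Your bookkeeping by severed edges is just a slightly more explicit version of the paper's ``exactly one of $x_k$ and $y_k$'' classification, and your reading of $C$ and $D$ as the counts of central subtrees through the two ports matches the formulas in Lemma~\ref{lemma-new}.
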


\begin{proof} We show the first case (the other one is similar). Similar
 to Lemma~\ref{lemma-component-switch}, let $T$ be the tree prior to switching
 and $S$ the tree after switching. We only need to consider the subtrees that
 contain exactly one of $x_k$ and $y_k$, yielding
\begin{eqnarray*}
f(S)-f(T)   &   = & D_{\geq k}^T(1+C)   + C_{\geq k}^T(1+D) - C_{\geq k}^T(1+C) -
D_{\geq k}^T(1+D) \\
                &   =   &   (D_{\geq k}^T-C_{\geq k}^T)(C - D) + 0  >   0
\end{eqnarray*}
where $C$ and $D$ are as defined in Lemma~\ref{lemma-new}.
\end{proof}

\begin{lemma}\label{lemma64}
If $C_1^T \geq D_1^T \geq C_2^T \geq D_2^T \geq \ldots \geq D_{k-1}^T \geq C_{k}^T$,
$C_{\geq k+1}^T \geq D_{\geq k+1}^T$ and $D_{k}^T > C_{k}^T$, then performing
$S_{v_1,v_2}^T(X_k,Y_k)$ will  increase  the number of subtrees.

If $C_1^T \geq D_1^T \geq C_2^T \geq D_2^T \geq \ldots \geq C_{k}^T \geq D_{k}^T$,
 $D_{\geq k+1}^T \geq C_{\geq k+2}^T$ and $C_{k+1}^T > D_k^T$, then performing
 $S_{v_1,v_2}^T(X_{k+1},Y_k)$ will increase the number of subtrees.
\end{lemma}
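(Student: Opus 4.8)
The plan is to mirror the proofs of Lemma~\ref{lemma-component-switch} and Lemma~\ref{lemma63}, treating the first statement in detail; the second then follows by the symmetric argument (switching $X_{k+1}$ and $Y_k$, with $C_{k+1}^T > D_k^T$ playing the role of $D_k^T > C_k^T$). Let $T$ be the tree before the switch $S_{v_1,v_2}^T(X_k,Y_k)$ and $S$ the tree after, so that $S$ differs from $T$ only in that $X_k$ now hangs from $y_k$ and $Y_k$ now hangs from $x_k$. As in the earlier lemmas, I would first argue that the subtrees containing both of $x_k,y_k$, or neither of them, contribute equally to $f(T)$ and $f(S)$: a subtree containing both $x_k$ and $y_k$ picks up a factor $C_k^T$ from $X_k$ and $D_k^T$ from $Y_k$ in either tree (only the end at which each is attached changes, and the product $C_k^T D_k^T$ is symmetric), while a subtree containing neither lies wholly inside one of the hanging components or avoids $X_k,Y_k$ altogether, and such subtrees are merely relocated by the switch. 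Hence only subtrees containing exactly one of $x_k,y_k$ can alter the count.

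Next I would set up the bookkeeping for those subtrees. Since $x_k$ is a cut vertex separating $X_k$, the outward tail $X_{\geq k+1}$, and the inner region (the part of $T$ strictly between $x_k$ and $y_k$, carrying $X_{k-1},\ldots,X_1,Y_1,\ldots,Y_{k-1}$), a subtree containing $x_k$ but not $y_k$ is determined independently by (i) its trace in $X_k$, counted by $C_k^T$, and (ii) its trace in the rest of the tree, which I would call $M_x$. The factor $M_x$ itself splits as a product over the two path directions, $M_x=(1+C_{\geq k+1}^T)\,(1+E_x)$, where $E_x$ counts the inner-region subtrees meeting $x_{k-1}$ (the extension toward the centre, which may reach $y_{k-1}$ but not $y_k$). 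Defining $M_y=(1+D_{\geq k+1}^T)(1+E_y)$ analogously, the corresponding count in $S$ replaces $C_k^T$ by $D_k^T$ at $x_k$ and $D_k^T$ by $C_k^T$ at $y_k$, with $M_x,M_y$ unaffected because the remaining components are untouched. This yields
\begin{equation*}
f(S)-f(T)=\bigl(D_k^T-C_k^T\bigr)\bigl(M_x-M_y\bigr).
\end{equation*}

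It then remains to show $M_x>M_y$, and this is the crux. The two hypotheses are tailored to the two factors: the assumption $C_{\geq k+1}^T\geq D_{\geq k+1}^T$ compares the outward parts directly, while the comparison of the inward parts $E_x,E_y$ reduces to the inequality $C\geq D$ of Lemma~\ref{lemma-new}, whose chain hypothesis $C_1^T\geq D_1^T\geq\cdots\geq C_{k-1}^T\geq D_{k-1}^T$ is exactly the initial segment of the chain assumed here. Writing $E_x,E_y$ as the inner-region counts meeting $x_{k-1}$ and $y_{k-1}$ and noting that both share a common ``both-endpoints'' term, their difference is governed by $C-D\geq 0$; together with $C_{\geq k+1}^T\geq D_{\geq k+1}^T$ and positivity of all factors this gives $M_x\geq M_y$, strict under the genericity assumption~(\ref{eq-new}). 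Combined with $D_k^T>C_k^T$, we conclude $f(S)-f(T)>0$. I expect the main obstacle to be precisely this last comparison: one must correctly factor $M_x,M_y$ over the independent outward and inward extensions at the cut vertex, and verify that the inner-region part is controlled by Lemma~\ref{lemma-new} in the correct orientation ($C\geq D$, not $D\geq C$), so that the outward hypothesis and the inner chain hypothesis reinforce rather than oppose each other.
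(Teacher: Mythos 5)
Your proposal is correct and follows essentially the same route as the paper: it reduces the change to subtrees containing exactly one of $x_k,y_k$, factors the counts as $C_k^T(1+C_{\geq k+1}^T)(1+C)$ and $D_k^T(1+D_{\geq k+1}^T)(1+D)$, and obtains $f(S)-f(T)=(D_k^T-C_k^T)\bigl[(1+C_{\geq k+1}^T)(1+C)-(1+D_{\geq k+1}^T)(1+D)\bigr]>0$ via Lemma~\ref{lemma-new} and the tail hypothesis. Your explicit note that strictness of the bracketed factor rests on the standing assumption~(\ref{eq-new}) is a point the paper leaves implicit.
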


\begin{proof}Similar to Lemma~\ref{lemma63}, we consider the first case. This time we have
\begin{eqnarray*}
f(S)-f(T)   &=      &   D_k^T(1+C_{\geq k+1}^T)(1 + C) + C_k^T(1+D_{\geq k+1}^T)(1 + D) \\
&           &   - C_k^T(1+C_{\geq k+1}^T)(1 + C) - D_k^T(1+D_{\geq k+1}^T)(1 + D)   \\
&=      &   (D_k^T-C_k^T)[(1+C_{\geq k+1})(1+C)-(1+D_{\geq k+1}^T)(1+D)]    >       0
\end{eqnarray*}
where $C$ and $D$ are as defined in Lemma~\ref{lemma-new}.
\end{proof}

With Lemmas~\ref{lemma63} and \ref{lemma64}, we combine component-switches and
tail-switches to sort a path. The following algorithm will terminate if the
 number of non-empty subtrees in both tails are equal to 1, indicating that the
 ends of the path have been reached.

\begin{algorithm}\label{tail-switch}
\textbf{(Phase II Switching Algorithm)}. For a tree $T$ and leaves $v_1, v_2$.
Choose $x_1$ to be the vertex with largest component on $P_T(v_1,v_2)$.
Let $y_1$ be the neighbor of $x_1$ on $P_T(v_1,v_2)$ with larger component.
Label the other vertices according to Fig.~\ref{new-path}. Then $S = P_2^T(v_1,v_2)$
is given below.
\textnormal{\texttt{
\begin{algorithmic}
\STATE $S = (\emptyset, \emptyset)$
\STATE $m = 0$
\STATE $T_m = T$
\STATE $k = 1$
\WHILE{$C_{\geq k+1}^{T_{m}} \neq 1$ OR $D_{\geq k+1}^{T_{m}} \neq 1$}
\STATE \hfill \COMMENT{\textnormal{\small{Terminate program if leaf vertex}}}
    \IF {$C_k^{T_{m}} < D_k^{T_{m}}$}
    \STATE \hfill \COMMENT{\textnormal{\small{Conditions of Lemma~\ref{lemma64}}}}
        \IF {$C_{\geq k+1}^{T_m} < D_{\geq k+1}^{T_m}$}
        \STATE \hfill \COMMENT{\textnormal{\small{Conditions of Lemma~\ref{lemma63}}}}
            \STATE $m = m+1$
            \STATE $T_m = S_{v_1,v_2}^{T_{m-1}}(X_{\geq k},Y_{\geq k})$
        \ELSE
            \STATE $m = m+1$
            \STATE $T_m = S_{v_1,v_2}^{T_{m-1}}(X_k,Y_k)$
        \ENDIF
    \ENDIF
    \IF {$D_k^{T_{m}} < C_{k+1}^{T_{m}}$}
    \STATE \hfill \COMMENT{\textnormal{\small{Repeat for $y_k$ and $x_{k+1}$}}}
        \IF {$D_{\geq k+1}^{T_m} < C_{\geq k+2}^{T_m}$}
            \STATE $m = m+1$
            \STATE $T_m = S_{v_1,v_2}^{T_{m-1}}(Y_{\geq k},X_{\geq k+1})$
        \ELSE
            \STATE $m = m+1$
            \STATE $T_m = S_{v_1,v_2}^{T_{m-1}}(Y_k,X_{k+1})$
        \ENDIF
    \ENDIF
    \STATE $k=k+1$
\ENDWHILE
\STATE $S = T_m$
\end{algorithmic}
}}
\end{algorithm}

After the Phase II algorithm, the path from $v_1$ to $v_2$ is labeled in such a way that
\begin{align*}
&   C_1^T \geq D_1^T \geq C_2^T \geq \ldots \geq C_n^T \geq D_n^T   \\
    \text{and } & C_{\geq 1}^T \geq D_{\geq 1}^T \geq C_{\geq 2}^T \geq
    \ldots \geq C_{\geq n}^T \geq D_{\geq n}^T
\end{align*}
for a path of odd length $2n-1$ (Fig.~\ref{final-path}) and
\begin{align*}
&   C_1^T \geq D_1^T \geq C_2^T \geq \ldots \geq D_n^T \geq C_{n+1}^T   \\
    \text{and } & C_{\geq 1}^T \geq D_{\geq 1}^T \geq C_{\geq 2}^T
    \geq \ldots \geq D_{\geq n}^T \geq C_{\geq n+1}^T
\end{align*}
for a path of even length $2n$.

\begin{figure}[ht]
\begin{center}
\begin{tikzpicture}

\draw (-1,0)--(0.5,0); \draw (1.5,0)--(5.5,0); \draw (6.5,0)--(8,0);

\draw [fill=blue!40] (0,0)--(-0.25,0.5)--(0.25,0.5)--(0,0);
\draw [fill=blue!70] (2,0)--(1.75,0.5)--(2.25,0.5)--(2,0);
\draw [fill=blue!100] (3,0)--(2.75,0.5)--(3.25,0.5)--(3,0);
\draw [fill=blue!85] (4,0)--(3.75,0.5)--(4.25,0.5)--(4,0);
\draw [fill=blue!55] (5,0)--(4.75,0.5)--(5.25,0.5)--(5,0);
\draw [fill=blue!25] (7,0)--(6.75,0.5)--(7.25,0.5)--(7,0);

\node at (0,0.75) [] {$X_n$};
\node at (1,0) [] {$\ldots$};
\node at (2,0.75) [] {$X_2$};
\node at (3,0.75) [] {$X_1$};
\node at (4,0.75) [] {$Y_1$};
\node at (5,0.75) [] {$Y_2$};
\node at (6,0) [] {$\ldots$};
\node at (7,0.75) [] {$Y_n$};

\node at (-1,0) [draw, circle, inner sep=0.5mm, label=below:$v_1$, fill=green!40] {};
\node at (0,0) [draw, circle, inner sep=0.5mm, label=below:$x_n$, fill=green!40] {};
\node at (2,0) [draw, circle, inner sep=0.5mm, label=below:$x_2$, fill=green!40] {};
\node at (3,0) [draw, circle, inner sep=0.5mm, label=below:$x_1$, fill=green!40] {};
\node at (4,0) [draw, circle, inner sep=0.5mm, label=below:$y_1$, fill=green!40] {};
\node at (5,0) [draw, circle, inner sep=0.5mm, label=below:$y_2$, fill=green!40] {};
\node at (7,0) [draw, circle, inner sep=0.5mm, label=below:$y_n$, fill=green!40] {};
\node at (8,0) [draw, circle, inner sep=0.5mm, label=below:$v_2$, fill=green!40] {};

\end{tikzpicture}
\end{center}
\caption{Path of length $2n-1$ after the Phase II algorithm}
\label{final-path}
\end{figure}
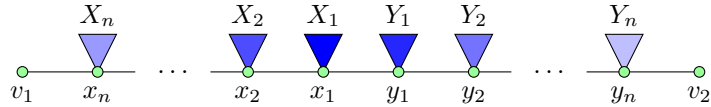

\subsection{Phase III of the switching algorithm}

The final phase of the switching algorithm uses degree-switches to sort the
 degrees of the vertices on the path.

\begin{lemma}\label{lemma-degree-switch}
Consider $P_T(v_1,v_2)$ that has been sorted by the Phase I and Phase II
algorithms and has been labeled as in Fig.~\ref{final-path}.
For $k=1,2,\ldots,n$, if $d(x_k)<d(y_k)$ or $d(y_k)<d(x_{k+1})$,
 performing a corresponding degree-switch (Definition~\ref{dfn-degree-switch}) will
  increase the number of subtrees.
\end{lemma}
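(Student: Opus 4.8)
The plan is to reduce everything to a bilinear expression in the two branch-factors that the degree-switch alters, and then to recognize the resulting comparison as an instance of Lemma~\ref{lemma-new}. First I would relabel the path linearly as $u_0u_1\cdots u_N$ (so that the $u_m$ run through $v_1,x_n,\ldots,x_1,y_1,\ldots,y_n,v_2$) and attach to each interior $u_m$ the quantity $c_m:=\prod(1+f_w(W))$, the product being over the off-path branches $W$ hanging at $u_m$ (each rooted at its neighbor $w$ of $u_m$). Then $c_m$ is exactly the number of subtrees containing $u_m$ inside its own component, i.e. the relevant $C_\bullet^T$ or $D_\bullet^T$. Since a subtree meets the path in a (possibly empty) contiguous block $u_i\cdots u_j$, I can write
\[ f(T) = K + \sum_{0\le i\le j\le N}\ \prod_{m=i}^{j} c_m, \]
where $K$ counts the subtrees avoiding the path. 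The branches moved by a degree-switch are merely re-rooted, so $K$ and every other off-path subtree count are untouched; the switch only rescales the two factors at the two vertices involved.

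Let $a$ and $b$ be those vertices, at positions $\alpha<\beta$, and let $\mu$ be the product of the $(1{+}f)$-values of the moved branches (so $\mu\ge 2$, as each moved branch is nonempty). Grouping the intervals above by whether they contain $\alpha$, $\beta$, both, or neither exhibits $f$ as bilinear, $f = (\text{const}) + c_\alpha P_\alpha + c_\beta P_\beta + c_\alpha c_\beta R$, with $P_\alpha,P_\beta,R$ independent of $c_\alpha,c_\beta$. The switch sends the gaining vertex's factor to $\mu$ times itself and the losing vertex's factor to $1/\mu$ times itself, so the $c_\alpha c_\beta$ term is unchanged and a short computation gives
\[ f(S)-f(T) = (\mu-1)\Bigl(c_{a^\ast}P_{a^\ast} - \tfrac{1}{\mu}\,c_{b^\ast}P_{b^\ast}\Bigr), \]
where $a^\ast$ is the gaining (lower-degree) vertex and $b^\ast$ the losing one. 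Because the path has been sorted by Phases I--II, the lower-degree vertex is in both sub-cases the one with the \emph{larger} component ($C_k\ge D_k$ when $a^\ast=x_k$, and $D_k\ge C_{k+1}$ when $a^\ast=y_k$), so $c_{a^\ast}\ge c_{b^\ast}$. Hence it suffices to prove $P_{a^\ast}\ge P_{b^\ast}$: then $c_{a^\ast}P_{a^\ast}\ge c_{b^\ast}P_{b^\ast}>\tfrac1\mu c_{b^\ast}P_{b^\ast}$, and positivity follows from $\mu>1$.

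For the comparison of the $P$'s I would factor each through its vertex: summing the left and right parts of the interval independently gives $P_\alpha=(1+g_{\alpha-1})\,\Sigma_{\to}$ and $P_\beta=\Sigma_{\leftarrow}\,(1+h_{\beta+1})$, where $g_{\alpha-1},h_{\beta+1}$ are the $C_{\le}$-/$C_{\ge}$-type sums over the vertices \emph{outside} $[\alpha,\beta]$, while $\Sigma_{\to}$ and $\Sigma_{\leftarrow}$ are the prefix-product sums of the \emph{same} between-sequence $c_{\alpha+1},\ldots,c_{\beta-1}$ read from the left and from the right. In the first sub-case ($a^\ast=x_k$) this between-sequence is $C_{k-1},\ldots,C_1,D_1,\ldots,D_{k-1}$, and $\Sigma_{\to}\ge\Sigma_{\leftarrow}$ is exactly the first assertion $C\ge D$ of Lemma~\ref{lemma-new}; the outer inequality $1+g_{\alpha-1}\ge 1+h_{\beta+1}$ holds because, under the reflection swapping $x_j\leftrightarrow y_j$, the two outer factors split into prefix products of $C_{k+1},C_{k+2},\ldots$ versus $D_{k+1},D_{k+2},\ldots$, which dominate termwise via $C_{k+j}\ge D_{k+j}$. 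Multiplying the two inequalities yields $P_{a^\ast}\ge P_{b^\ast}$. The second sub-case ($a^\ast=y_k$, losing vertex $x_{k+1}$) is symmetric about $x_1$: the between-sequence is $C_k,\ldots,C_1,D_1,\ldots,D_{k-1}$, the inner inequality is now the second assertion $D'\ge C'$ of Lemma~\ref{lemma-new}, and the outer one compares prefix products of $D_{k+1},D_{k+2},\ldots$ against $C_{k+2},C_{k+3},\ldots$.

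The main obstacle is bookkeeping rather than a genuinely hard estimate: I must arrange the bilinear decomposition so that the two factors of each $P$ line up precisely with the hypotheses of Lemma~\ref{lemma-new} (matching the between-sequence to the chain $C_1\ge D_1\ge\cdots$, and the outer factor to the reflection argument), and I must keep track of which vertex gains branches so that $c_{a^\ast}\ge c_{b^\ast}$ holds with the correct orientation in each sub-case. Once the algebra is organized this way, the $c_\alpha c_\beta$ term cancels exactly, strictness is free from $\mu>1$, and the claim follows, consistent with the convention of assuming (\ref{eq-new}).
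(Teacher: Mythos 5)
Your proposal is correct and is essentially the paper's own argument: both isolate the subtrees containing exactly one of the two vertices involved in the degree-switch, reduce the change in $f$ to comparing $C_k^T(1+C_{\geq k+1}^T)(1+C)$ with $(D_k^T/D_k'')(1+D_{\geq k+1}^T)(1+D)$, and settle the comparison factor by factor using Lemma~\ref{lemma-new} together with the orderings guaranteed by Phases I--II. Incidentally, your factorization $f(S)-f(T)=(\mu-1)\bigl(c_{a^\ast}P_{a^\ast}-\mu^{-1}c_{b^\ast}P_{b^\ast}\bigr)$ is the algebraically correct one, whereas the paper's displayed factorization $(D_k''-1)(C_k^T-D_k')\bigl[(1+C_{\geq k+1}^T)(1+C)-(1+D_{\geq k+1}^T)(1+D)\bigr]$ does not actually equal the line preceding it (the correct grouping is $(D_k''-1)\bigl(C_k^T(1+C_{\geq k+1}^T)(1+C)-D_k'(1+D_{\geq k+1}^T)(1+D)\bigr)$), though the positivity conclusion is unaffected.
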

\begin{proof}
We show only the case where $d(x_k) < d(y_k)$. Let $S$ be the tree after
moving $Y_k''$ to $x_k$. Also, let $D_k' = f_{y_k}^T(Y_k')$ and $D_k'' = f_{y_k}^T(Y_k'')$.

Notice that $D_k^T = D_k'D_k''$, then $f(S)-f(T)$ is
\begin{eqnarray*}
& & C^T_kD_k''(1+C^T_{\geq k+1})(1 + C) + D_k'(1+D^T_{\geq k+1})(1 + D)         \\
& & -C^T_k(1+C^T_{\geq k+1})(1 + C) - D_k'D_k''(1+D^T_{\geq k+1})(1 + D)        \\
&=& (D_k''-1)(C^T_k-D_k')[(1+C^T_{\geq k+1})(1+C)-(1+D^T_{\geq k+1})(1+D)]>0.
\end{eqnarray*}
\end{proof}

The following algorithm applies Lemma~\ref{lemma-degree-switch}. Note that
the algorithm terminates after any degree-switch since the conditions for an
 increase in the number of subtrees as a result of a degree-switch may no longer
  be certain after the switch. Consequently the path must be resorted by the
  Phase I and II algorithms after every degree-switch. However this process will
  stop since every switch increases the number of subtrees.

\begin{algorithm}
\textbf{(Phase III Switching Algorithm)} Given $P_T(v_1,v_2)$ that has been sorted
 by the Phase I and Phase II algorithms. Then $S=P_3^T(v_1,v_2)$ is given below.
\textnormal{\texttt{
\begin{algorithmic}
\FOR{$i=1$ to $n$}
    \IF{$d(x_i)<d(y_i)$}
        \STATE $S = R_{v_1,v_2}^T(x_i,y_i)$
        \STATE break \hfill \COMMENT{\textnormal{\small{Terminate algorithm}}}
    \ENDIF
    \IF{$d(y_i)<d(x_{i+1})$}
        \STATE $S = R_{v_1,v_2}^T(y_i,x_{i+1})$
        \STATE break \hfill \COMMENT{\textnormal{\small{Terminate algorithm}}}
    \ENDIF
\ENDFOR
\end{algorithmic}
}}
\end{algorithm}

\subsection{The complete algorithm}

We now introduce our final algorithm which encompasses all three previously discussed phases,
wherein every step of the algorithm increases the number of subtrees.
Therefore the algorithm will terminate after finitely many steps.

\begin{algorithm}\label{switching}
\textbf{(Switching Algorithm)} Let $T$ be a tree with leaf vertices
 $v_1, v_2, \ldots, v_l$. Then $S=SA(T)$ is given below.
\textnormal{\texttt{
\begin{algorithmic}
\STATE $S = (\emptyset, \emptyset)$
\STATE $J = T$
\STATE $m=0$
\STATE $T_m = 0$
\WHILE{$S \neq J$}
\STATE \hfill \COMMENT{\textnormal{\small{Terminate when every path is sorted}}}
    \STATE $S = J$
    \STATE $J = (\emptyset, \emptyset)$
    \FOR{$i = 1$ to $l$}
        \FOR{$j = 1$ to $l$}
            \WHILE{$J \neq T_m$}
            \STATE \hfill \COMMENT{\textnormal{\small{Terminate when path is sorted}}}
                \STATE $J=T_m$
                \STATE $T_m = P_1^{T_m}(v_i,v_j)$ \hfill \COMMENT{\textnormal{\small{Phase I}}}
                \STATE $T_m = P_2^{T_m}(v_i,v_j)$ \hfill \COMMENT{\textnormal{\small{Phase II}}}
                \STATE $m=m+1$
                \STATE $T_m = P_3^{T_{m-1}}(v_i,v_j)$ \hfill \COMMENT{\textnormal{\small{Phase III}}}
            \ENDWHILE
        \ENDFOR
    \ENDFOR
\ENDWHILE
\end{algorithmic}
}}
\end{algorithm}

\begin{rmk}\label{switching-remark}
Suppose $S = SA(T)$ where $T$ is a tree with given degree sequence.
Then for any path in $S$ from one leaf to another, we can label the vertices
on the path according to Fig.~\ref{final-path}. On such paths we have,
\begin{align*}
&   C^S_1 \geq D^S_1 \geq C^S_2 \geq \ldots \geq C^S_n \geq D^S_n, \\
&   C^S_{\geq 1} \geq D^S_{\geq 1} \geq C^S_{\geq 2} \geq \ldots
\geq C^S_{\geq n} \geq D^S_{\geq n},   \\
&    d_S(x_1) \geq d_S(y_1) \geq d_S(x_2) \geq \ldots \geq d_S(x_n) \geq d_S(y_n)
\end{align*}
for paths of odd length $2n-1$, and
\begin{align*}
&   C^S_1 \geq D^S_1 \geq C^S_2 \geq \ldots \geq D^S_n \geq C^S_{n+1}, \\
&   C^S_{\geq 1} \geq D^S_{\geq 1} \geq C^S_{\geq 2} \geq \ldots \geq D^S_{\geq n}
\geq C^S_{\geq {n+1}},   \\
&    d_S(x_1) \geq d_S(y_1) \geq d_S(x_2) \geq \ldots \geq d_S(y_n) \geq d_S(x_{n+1})
\end{align*}
for paths of even length $2n$.

If the first two conditions imply the third one, then the Phase III
algorithm would be unnecessary. We post it as a question in Section 4.
\end{rmk}

\section{The extremal trees}

The following observation is an immediate consequence from the definition of the greedy tree.
\begin{lemma}\label{gre}
A rooted tree $T$ with a given degree sequence is a greedy tree if:

i) the root $r$ has the largest degree;

ii) the heights of any two leaves differ by at most 1;

iii) for any two vertices $u$ and $w$, if $h_T(w) < h_T(u)$, then $d(w)\geq d(u)$;

iv) for any two vertices $u$ and $w$ of the same height, $d(u)> d(w) \Rightarrow d(u')
\geq d(w')$ for any
successors $u'$ of $u$ and $w'$ of $w$ that are of the same height;

v) for any two vertices $u$ and $w$ of the same height, $d(u)> d(w) \Rightarrow d(u')
\geq d(w') $ and
$ d(u'') \geq d(w'') $ for any
siblings $u'$ of $u$ and $w'$ of $w$ or successors $u''$  of $u'$ and $w''$ of $w'$ of
the same height.
\end{lemma}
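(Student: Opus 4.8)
The plan is to show that conditions (i)--(v) force $T$ to coincide, up to isomorphism, with the tree produced by the greedy algorithm of Definition~\ref{def21}. That algorithm is deterministic once we agree that ties among equal degrees may be broken arbitrarily, and different tie-breakings yield isomorphic trees; hence the greedy tree on a fixed degree sequence is unique up to isomorphism. It therefore suffices to establish two things: that the greedy tree itself satisfies (i)--(v), and that (i)--(v) determine the rooted tree up to isomorphism. Combining these gives that any $T$ obeying (i)--(v) is isomorphic to the greedy tree, which is exactly the assertion.

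First I would verify the five conditions directly from the construction in Definition~\ref{def21}. Condition (i) is step (i) of the algorithm. For (ii), the algorithm assigns vertices in breadth-first order, filling each level completely before opening the next, so every leaf sits at the deepest or second-deepest level and two leaf heights can differ by at most one. For (iii), the largest unused degrees are assigned at each level and a level is exhausted before the next begins; since degrees are handed out in a globally decreasing fashion along the breadth-first order, the smallest degree used at height $h$ is at least the largest degree used at height $h+1$, which is precisely $h_T(w)<h_T(u)\Rightarrow d(w)\geq d(u)$. Conditions (iv) and (v) record the priority in which the algorithm gives out children: a vertex of larger degree is processed earlier, hence receives children of at least as large degree as a same-height vertex of smaller degree, and this advantage is inherited by siblings and by successors of the same height.

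Conversely, I would reconstruct the greedy tree from (i)--(v) by induction on the height $h$. The root is pinned down by (i). Assuming the degree multiset at each height up to $h$, together with the parent--child incidences, already agrees with the greedy tree, condition (iii) forces the degrees available at height $h+1$ to be exactly the next block of largest remaining degrees, and (iv)--(v) force them to be distributed among the height-$(h+1)$ slots in the same order the greedy algorithm uses, giving the larger degrees to the children of the higher-degree, earlier-processed parents. Matching these assignments extends the isomorphism to the height-$(h+1)$ layer; condition (ii) guarantees that no leaf is forced prematurely and that both trees terminate at the same depth.

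The main obstacle I anticipate is the bookkeeping in this inductive step: one must argue that (iv) and (v), phrased in terms of successors and siblings of equal height rather than in terms of the algorithm's explicit processing queue, really do pin down the same ordering the greedy algorithm imposes. In particular, one has to check that the level-by-level degree monotonicity enforced by (iv)--(v) is strong enough to reproduce the rule ``largest available degree to the currently highest-priority vertex,'' and that ties with $d(u)=d(w)$ produce only relabelings rather than genuinely distinct trees. Once this correspondence between the static conditions and the dynamic greedy order is made precise, the isomorphism follows and the lemma is proved.
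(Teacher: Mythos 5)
The paper offers no argument for this lemma at all: it is introduced with the sentence ``The following observation is an immediate consequence from the definition of the greedy tree,'' so there is no written proof to compare yours against. Your overall plan --- verify that the greedy tree satisfies (i)--(v), show that (i)--(v) determine the rooted tree up to isomorphism, and conclude that any tree satisfying them is greedy --- is a legitimate way to organize the argument (it proves somewhat more than the single implication the lemma asserts).

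However, what you flag as an anticipated ``obstacle'' is the entire mathematical content of the statement, and you do not close it. Conditions (iv) and (v) are conditional on the \emph{strict} inequality $d(u)>d(w)$; whenever an entire level consists of vertices of equal degree (which happens routinely), they say nothing directly about how the next level's degrees are partitioned among those vertices, and the greedy order must be extracted, if at all, from the sibling clause of (v) applied one level further down. It is far from clear that this suffices. Concretely: take a root of degree $3$ whose three children all have degree $3$, whose six grandchildren all have degree $3$, and whose twelve height-$3$ descendants have degrees $3,3,2,2,2,2,2,2,1,1,1,1$. The lexicographically greedy assignment gives the six grandchildren the child-degree multisets $\{3,3\},\{2,2\},\{2,2\},\{2,2\},\{1,1\},\{1,1\}$ in processing order, but the redistribution in which one parent's two grandchildren receive $\{3,3\}$ and $\{1,1\}$ while another's receive $\{2,2\}$ and $\{2,2\}$ appears to satisfy (i)--(v) as literally stated (every height-$3$ vertex has a sibling of its own degree, so the sibling clause of (v) is vacuously easy), yet it is not isomorphic to the greedy tree. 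Your companion claim that ``different tie-breakings yield isomorphic trees'' is precisely the assertion that such configurations cannot arise, and it is asserted rather than proved. Until you either show that (iv)--(v) rule out all such redistributions or adopt a strengthened reading of them (e.g.\ a lexicographic comparison of the level-by-level degree sequences of the subtrees rooted at $u$ and $w$), the reconstruction step of your induction --- and hence the proof --- is incomplete.
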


\subsection{The maximal tree}

\begin{theorem} \label{theorem25}
Given the degree sequence, the greedy tree maximizes the number of subtrees.
\end{theorem}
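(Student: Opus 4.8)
The plan is to prove the theorem by exploiting the Switching Algorithm (Algorithm~\ref{switching}) rather than arguing directly. Since there are only finitely many trees with the prescribed degree sequence, a subtree-maximizer $T^*$ exists. Every elementary switch invoked by $SA$ strictly increases $f$ (Lemmas~\ref{lemma-component-switch}, \ref{lemma63}, \ref{lemma64}, and \ref{lemma-degree-switch}), so running $SA$ on any tree $T$ yields $f(SA(T)) \ge f(T)$, and a maximizer can admit no productive switch. It therefore suffices to prove that every tree on which $SA$ halts with no change --- every \emph{fixed point} of $SA$ --- is isomorphic to the greedy tree. Granting this, $SA(T^*)$ is greedy and $f(\text{greedy}) = f(SA(T^*)) \ge f(T^*)$; as $T^*$ is maximal, the greedy tree itself must be a maximizer.

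Next I would characterize the fixed points through Remark~\ref{switching-remark}: on a fixed point $S$, every leaf-to-leaf path, relabeled as in Fig.~\ref{final-path}, satisfies the chain inequalities on the component values $C_i^S, D_i^S$ and their cumulative versions $C_{\ge i}^S, D_{\ge i}^S$, together with the degree chain $d_S(x_1) \ge d_S(y_1) \ge d_S(x_2) \ge \cdots$. I would then verify, one by one, the sufficient conditions (i)--(v) of Lemma~\ref{gre}. Rooting $S$ at a vertex $r$ of maximum degree gives condition (i) at once.

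The heart of the argument is deducing the global conditions (ii)--(v) from the purely path-local data above. The key device is that any two vertices to be compared can be placed on a common leaf-to-leaf path, so that the path-sorted degree order (with its peak sitting at the center of the path) certifies the required inequality. For (iii) I would show that moving toward $r$ cannot decrease degree, since a descending pair would violate the degree chain on a suitable path through $r$ --- equivalently, would still permit a degree-switch, contradicting that $S$ is a fixed point. For the height-balance condition (ii) I would argue by contradiction: two leaves whose heights from $r$ differ by at least $2$ yield a path whose component values fail to be unimodally sorted, contradicting the fixed-point inequalities. Conditions (iv)--(v), the successor/sibling monotonicity, follow by applying the same path-certification to same-height vertices and their descendants. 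Finally, since the greedy tree is the unique tree with the given degree sequence meeting (i)--(v), $S$ must be it.

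The main obstacle I anticipate is precisely this local-to-global passage. The algorithm only guarantees an ordering \emph{along each individual path}, whereas Lemma~\ref{gre} demands coherent degree monotonicity across the whole tree and a balanced height profile. The delicate points are, first, choosing the root consistently and relating the component-center of each path to $r$; and second, producing, for each comparison required by (ii)--(v), a single leaf-to-leaf path on which the established chain inequalities force the conclusion. Extra care is needed where the ``component order'' and the ``degree order'' along a path need not a priori coincide --- exactly the discrepancy that Phases I--II (sorting by component value) and Phase III (sorting by degree) are designed to reconcile.
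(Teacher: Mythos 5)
Your proposal is correct and follows essentially the same route as the paper: both arguments observe that a maximizer must be a fixed point of the switching algorithm, invoke the path-ordering conditions of Remark~\ref{switching-remark}, and then carry out the local-to-global passage to verify conditions (i)--(v) of Lemma~\ref{gre}. The only notable difference is the choice of root --- the paper roots at the vertex contained in the most subtrees (using the result from \cite{binl} that this ``core'' is one or two adjacent vertices) and then deduces it has maximum degree, whereas you root at a maximum-degree vertex directly --- but this does not change the substance of the argument, and you correctly identify the same delicate local-to-global step that the paper itself treats only briefly.
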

\begin{proof}
First note that the maximal tree must satisfy the conditions in
Remark~\ref{switching-remark} or we could increase the number of subtrees.

On the other hand, we will show that these conditions in Remark~\ref{switching-remark}
 imply the properties (i -- v) of the greedy tree listed in Lemma~\ref{gre}.
 It was shown in \cite{binl} that the set of vertices that is contained in most
 subtrees consists of one or two adjacent vertices. We illustrate the idea by
 considering the first case with this vertex designated as the root.

Root the tree at the vertex $r$ contained in the most subtrees. Consider a
 path between two leaf vertices that contains $r$ and let $v$ be a vertex on the
 path adjacent to $r$. Then, since the number of subtrees containing $r$ is larger than
  the number of subtrees containing $v$, the tail  containing $r$ is larger than the
  tail containing $v$. By Remark~\ref{switching-remark}, we can label, on this path,
  the root $r$ as $x_1$ and the rest of the vertices according to Fig.~\ref{final-path}
  with $v$ labeled as $y_1$ or $x_2$ (depending on which neighbor of $r$ on this path
  has larger tail). Thus, the root has the largest degree of all vertices in the tree.

Let $u$ and $w$ be vertices such that $h(u) < h(w)$, and let $s$ be the common
ancestor of $u$ and $w$ with the greatest height. Then $h(u) = d(u,s) + h(s) < d(w,s) + h(s)$ implies that any path containing $u$ or $w$ and the root must also contain $s$. Furthermore, by Remark~\ref{switching-remark} again, $d(s) \geq d(u) \geq d(w)$. Therefore, vertices which are closer to the root will have greater degree.

This fact will force vertices which are closer to the root to be associated with the
 largest component and the largest degree on any path that we are studying. Then the
  properties (i -- v) of Lemma~\ref{gre} follows from considering the path through
  appropriate vertices.
\end{proof}

\subsection{Trees close to being maximal}

We have shown that the switching algorithm always converges to the maximal tree.
Hence, by back-tracking the algorithm, the candidates for the $k$th maximal tree
can be easily identified. Then the following corollaries are immediate consequences
 along this line:

\begin{cor}
Starting with the greedy tree, any tree with the same degree sequence can be achieved
 in a finite number of switchings wherein every switching results in a decrease in the
 number of subtrees.
\end{cor}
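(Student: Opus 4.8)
The plan is to exploit the reversibility of the three switching operations. The key observation is that every switch used by Algorithm~\ref{switching} is invertible, and its inverse is a switch of the same type: a component-switch $S_{v_1,v_2}^T(X_k,X_{k+1})$ of Definition~\ref{def60} is undone by performing the identical interchange once more; a tail-switch is likewise its own inverse; and a degree-switch $R_{v_1,v_2}^T(x_l,x_k)$ of Definition~\ref{dfn-degree-switch}, which transports the largest $q-p$ components from $x_k$ to $x_l$ and thereby exchanges the two degrees, is undone by moving exactly those components back from $x_l$ to $x_k$, which is again a degree-switch. First I would record this invertibility formally for all three switch types.

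Next I would invoke Theorem~\ref{theorem25} together with the termination of Algorithm~\ref{switching}. Let $G$ be the greedy tree with the prescribed degree sequence and let $T$ be an arbitrary tree with the same degree sequence. Applying the Switching Algorithm to $T$ produces a finite chain of trees $T = T_0, T_1, \ldots, T_N = S$, in which each transition $T_{i-1} \to T_i$ is one of the three switches and strictly increases the number of subtrees, and in which the terminal tree $S = SA(T)$ satisfies the conditions of Remark~\ref{switching-remark}. By the argument in the proof of Theorem~\ref{theorem25}, those conditions force $S$ to coincide with the greedy tree; hence $SA(T) = G$ for every such $T$.

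Finally I would reverse this chain. Reading the sequence backwards yields $G = T_N, T_{N-1}, \ldots, T_0 = T$, and by the invertibility established in the first step each backward transition $T_i \to T_{i-1}$ is again an admissible switch. Since the corresponding forward switch strictly increased the number of subtrees, its inverse strictly decreases it; thus every step of the reversed sequence lowers the number of subtrees, and there are finitely many of them. This exhibits $T$ as the result of a finite number of switchings applied to $G$, each decreasing the number of subtrees, which is exactly the assertion of the corollary.

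I expect the only genuine obstacle to be the bookkeeping in the first step, namely verifying that the inverse of each switch is itself an admissible switch and, in particular, that reversing a degree-switch needs no hypothesis that may have been destroyed by performing it. Because a degree-switch merely relocates a fixed collection of components between two path-vertices and swaps their degrees, its reversal is again a degree-switch, so this difficulty does not actually arise; the monotone decrease of the number of subtrees along the reversed chain is then immediate from the strict inequalities already furnished by Lemmas~\ref{lemma-component-switch}, \ref{lemma63}, \ref{lemma64}, and \ref{lemma-degree-switch}.
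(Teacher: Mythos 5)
Your proposal is correct and is essentially the argument the paper intends: the paper derives this corollary by ``back-tracking'' the Switching Algorithm, i.e.\ exactly your observation that $SA(T)$ is the greedy tree for every $T$ with the given degree sequence and that reversing the finite increasing chain of invertible switches yields a finite decreasing chain from the greedy tree to $T$. You have merely made explicit the invertibility check that the paper leaves implicit.
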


\begin{cor}
The $k$th maximal tree can be achieved with at most $k-1$ switches from the greedy tree.
\end{cor}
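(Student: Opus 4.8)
The plan is to combine the preceding corollary with the strict monotonicity of the subtree count under a single switch. Let $G$ denote the greedy tree and let $T^{(k)}$ denote the $k$th maximal tree, so that exactly $k-1$ distinct values of $f$ (taken over all trees with the prescribed degree sequence) exceed $f(T^{(k)})$. By the preceding corollary, there is a finite sequence of switches
$$ G = S_0, \; S_1, \; \ldots, \; S_m = T^{(k)}, $$
each of which decreases the number of subtrees. I would fix such a sequence of \emph{minimal} length $m$, so that the corollary reduces to the single inequality $m \le k-1$.

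The key observation I would then record is that the subtree counts strictly decrease along the sequence: every one of the switching lemmas (Lemma~\ref{lemma-component-switch}, Lemma~\ref{lemma63}, Lemma~\ref{lemma64}, and Lemma~\ref{lemma-degree-switch}) yields a strict change in $f$ under the genericity assumption~\eqref{eq-new}, the degenerate equality cases corresponding merely to relabelings that leave the tree fixed. Hence
$$ f(S_0) > f(S_1) > \cdots > f(S_m) = f(T^{(k)}), $$
so the $m$ numbers $f(S_0), \ldots, f(S_{m-1})$ are pairwise distinct and each is strictly larger than $f(T^{(k)})$.

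To finish, I would note that $\{ f(S_0), \ldots, f(S_{m-1}) \}$ is therefore a set of $m$ distinct values of $f$ lying strictly above $f(T^{(k)})$. Since only $k-1$ such values exist by the definition of the $k$th maximal tree, this set injects into a set of size $k-1$, forcing $m \le k-1$. As $m$ is the number of switches used, this is exactly the asserted bound.

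The hard part will be the bookkeeping around the meaning of ``$k$th maximal tree'' when ties in $f$ occur. With the distinct-value convention adopted above the argument is airtight, since we only ever count values of $f$. If instead one ranks individual trees, then one must count trees rather than values: here the minimality of the chosen sequence is what guarantees that the $S_i$ are pairwise distinct (no tree is visited twice), and one checks that $S_0, \ldots, S_{m-1}$ inject into the set of trees having strictly more subtrees than $T^{(k)}$. I would make this dependence on minimality explicit, as it is the only place where choosing a shortest sequence, rather than the arbitrary one furnished by the corollary, is actually used.
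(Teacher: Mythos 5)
Your argument is correct and is essentially the paper's own (the paper states this corollary without proof, as an ``immediate consequence'' of back-tracking the switching algorithm, and your pigeonhole count on the strictly decreasing values $f(S_0) > \cdots > f(S_m)$ is exactly the intended formalization). One small simplification: the minimality of the sequence is never actually needed, since strict decrease of $f$ along the sequence already forces the $S_0,\ldots,S_{m-1}$ (and their $f$-values) to be pairwise distinct.
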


Consequently, the second maximal tree is exactly one switch away
from the greedy tree. To illustrate the insights provided by the
algorithm, we examine the second maximal tree in a bit more detail.
The following observation helps us to identify the `smaller'
switches in the sense that it produces a smaller change in the
number of subtrees.

\begin{lemma}\label{tswitch}
Given the greedy tree $T$, for any component-switch or degree-switch,
there exists a tail-switch which produces a decrease in the number of subtrees at least
 as small as that of the component-switch or degree-switch.
\end{lemma}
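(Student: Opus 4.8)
The plan is to prove the statement by comparing the explicit formulas for the decrease in the number of subtrees produced by each type of switch, all of which were computed in the proofs of Lemmas~\ref{lemma-component-switch},~\ref{lemma63},~\ref{lemma64} and~\ref{lemma-degree-switch}. Since $T$ is the greedy tree, every path is sorted as in Remark~\ref{switching-remark}, so each of these decreases is a product of factors that are nonnegative by the monotonicity chains listed there together with Lemma~\ref{lemma-new}. My strategy is to establish the claim in two reductions: first I would show that every degree-switch decreases the count by at least as much as a suitable component-switch, and then that every component-switch decreases it by at least as much as a suitable tail-switch. Chaining these produces a tail-switch whose decrease is no larger than that of the given component-switch or degree-switch.

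For the first reduction I would use the formula from the proof of Lemma~\ref{lemma-degree-switch}. Writing $B := (1+C^T_{\ge k+1})(1+C)-(1+D^T_{\ge k+1})(1+D)$ for the common bracket, the degree-switch at position $k$ has decrease $(D_k''-1)(C_k^T-D_k')\,B$, while the component-switch interchanging $X_k$ and $Y_k$ (first case of Lemma~\ref{lemma64}) has decrease $(C_k^T-D_k^T)\,B$ with the \emph{same} $B$. Using $D_k^T=D_k'D_k''$ and the fact that a degree-switch moves at least one nonempty component, so that $D_k''=f_{y_k}^T(Y_k'')\ge 2$, a direct expansion gives
\[
(D_k''-1)(C_k^T-D_k')-(C_k^T-D_k^T)=C_k^T(D_k''-2)+D_k'\ge 0 .
\]
Since $B\ge 0$, the degree-switch decrease is at least that of this component-switch, so it suffices to dominate every component-switch by a tail-switch.

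For the second reduction I would exploit the telescoping identity $X_{\ge k}=X_k\cup X_{\ge k+1}$, which expresses each tail count multiplicatively as $C^T_{\ge k}=C^T_k(1+C^T_{\ge k+1})$ (and likewise for the $D$'s); this lets me rewrite a single-component swap of $X_k\leftrightarrow Y_k$ in terms of the tail-switches at positions $k$ and $k+1$ and compare the resulting products, using the monotonicity of Remark~\ref{switching-remark}, the inequality $C\ge D$ of Lemma~\ref{lemma-new}, and the expansion $B\ge C-D$. For the adjacent same-side component-switch of Lemma~\ref{lemma-component-switch}, whose decrease is $(C_k^T-C_{k+1}^T)(C^T_{\ge k+2}-C^T_{\le k-1})$, I would compare against the most peripheral tail-switch and bound its tail-difference factor by the corresponding difference of component counts. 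I expect this second reduction to be the main obstacle: because the tail counts are multiplicative rather than additive in the component counts, the comparison is not a clean factor-by-factor domination, and the appropriate tail-switch must be chosen adaptively. Here I would lean on the greedy-specific structure of $T$---that the components hanging along any path are themselves balanced greedy trees whose sizes vary slowly---to guarantee that a tail-switch of two tails that are consecutive in the sorted order yields a net change no larger than the given component-switch.
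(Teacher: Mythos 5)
There is a genuine gap: the second reduction, which you yourself identify as ``the main obstacle,'' is never actually carried out, and it is the heart of the lemma. You do not name which tail-switch dominates a given component-switch; you only express the hope that ``a tail-switch of two tails that are consecutive in the sorted order'' will work because the components of a greedy tree ``vary slowly.'' That is not an argument, and the multiplicativity problem you flag ($C^T_{\ge k}=C^T_k(1+C^T_{\ge k+1})$, so tail differences are not controlled by component differences in any factor-by-factor way) is real. The paper resolves it with two ideas you are missing. First, for an adjacent component-switch $X_k\leftrightarrow X_{k+1}$ along the path, no comparison is needed at all: interchanging $X_k$ and $X_{k+1}$ produces \emph{exactly the same tree} as the tail-switch $S(X_{\le k-1},X_{\ge k+2})$ (read the path from the other end), so this component-switch \emph{is} a tail-switch. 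Second, for a cross-switch $X_k\leftrightarrow Y_k$, the paper does not search among tail-switches of the original path at all; it reinterprets the branches hanging off $x_k$ and $y_k$ as tails $X_{k,1},\ldots,X_{k,p-1}:=X_{\ge k+1}$ and $Y_{k,1},\ldots,Y_{k,q-1}:=Y_{\ge k+1}$ (i.e., tails of a different leaf-to-leaf path through those branches), takes the tail-switch of the \emph{smallest} branch at $x_k$ with the \emph{largest} branch at $y_k$, and verifies by a direct product computation that the resulting tree $H'$ has strictly more subtrees than the tree $H$ obtained from the component-switch, hence a strictly smaller decrease. Without some such explicit choice and computation, your proof does not go through.

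Your first reduction is fine as algebra and is a genuinely different route from the paper: you dominate the degree-switch decrease $(D_k''-1)(C_k^T-D_k')B$ by the component-switch decrease $(C_k^T-D_k^T)B$ via the identity $(D_k''-1)(C_k^T-D_k')-(C_k^T-D_k'D_k'')=C_k^T(D_k''-2)+D_k'\ge 0$, using $D_k''\ge 2$ and $B\ge 0$; the paper instead handles degree-switches directly, by the same smallest-branch-versus-largest-branch tail-switch as in its Case 2. Your reduction is a nice shortcut, but it only shifts all the weight onto the unproved component-to-tail step, so the proposal as written does not establish the lemma.
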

\begin{proof}
{\bf CASE 1}. Consider the inverse of a component-switch in Phase I and suppose that
we wish to switch $X_k$ and $X_{k+1}$ to decrease  the number of subtrees.
The switch is the same as performing a tail-switch with $X_{\leq k-1}$ and $X_{\geq k+2}$.

{\bf CASE 2}. Consider the inverse of a component-switch in Phase II as discussed in
Lemma~\ref{lemma64}, that we wish to switch $X_k$ and $Y_k$ to decrease the number
of subtrees. Note that, from the definition of a greedy tree, we already know that
each branch of $X_k$ and the tail $X_{\geq k+1}$ is at least as large as each branch
of $Y_k$ and the tail $Y_{\geq k+1}$. Consider each branch of $X_k$ as a tail and
label the tails at $x_k$ as $X_{k,1}, X_{k,2}, \ldots , X_{k,p-1} := X_{\geq k+1}$,
where $p$ is the degree of $x_k$. Similarly, label the tails at $y_k$ as
$Y_{k,1}, Y_{k,2}, \ldots , Y_{k,q-1} := Y_{\geq k+1}$ where $q$ is the degree of $y_k$.
 Let $H$ be the tree produced by the switch $S(X_k,Y_k)$ and let $H'$ be the tree
 produced by the tail switch $S(X_{k,j},Y_{k,l})$, with $X_{k,j}$ ($Y_{k,l}$) being the
  smallest (largest) tail among $X_{k,1}, X_{k,2}, \ldots , X_{k,p-1}$ ($Y_{k,1},
   Y_{k,2}, \ldots , Y_{k,q-1}$) (Fig.~\ref{figure-component-tail-smaller}). Then

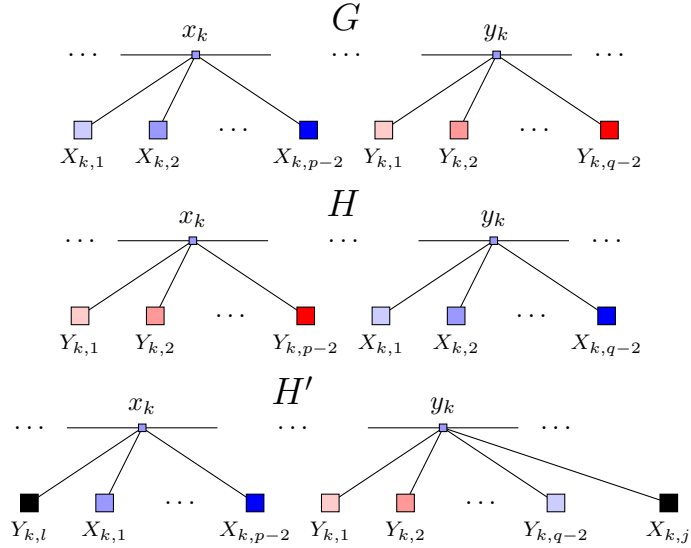
\begin{figure}[ht]
\begin{center}
\begin{tikzpicture}

\draw (1,0)--(3,0);
\draw (5,0)--(7,0);

\draw (0.5,-1)--(2,0);
\draw (1.5,-1)--(2,0);
\draw (3.5,-1)--(2,0);

\draw (4.5,-1)--(6,0);
\draw (5.5,-1)--(6,0);
\draw (7.5,-1)--(6,0);

\node at (0.5,0) [] {$\ldots$};
\node at (4,0) [] {$\ldots$};
\node at (7.5,0) [] {$\ldots$};

\node at (4,0.5) {\Large{$G$}};

\node at (2,0) [rectangle, draw, inner sep=0.5mm, label=above:$x_{k}$, fill=blue!40!white] {};
\node at (6,0) [rectangle, draw, inner sep=0.5mm, label=above:$y_{k}$, fill=blue!40!white] {};

\node at (0.5,-1) [rectangle, draw, label=below:\footnotesize{$X_{k,1}$}, fill=blue!20!white] {};
\node at (1.5,-1) [rectangle, draw, label=below:\footnotesize{$X_{k,2}$}, fill=blue!40!white] {};
\node at (2.5,-1) {$\ldots$};
\node at (3.5,-1) [rectangle, draw, label=below:\footnotesize{$X_{k,p-2}$}, fill=blue!100!white] {};

\node at (4.5,-1) [rectangle, draw,  label=below:\footnotesize{$Y_{k,1}$}, fill=red!20!white] {};
\node at (5.5,-1) [rectangle, draw,  label=below:\footnotesize{$Y_{k,2}$}, fill=red!40!white] {};
\node at (6.5,-1) {$\ldots$};
\node at (7.5,-1) [rectangle, draw,  label=below:\footnotesize{$Y_{k,q-2}$}, fill=red!100!white] {};
\end{tikzpicture}

\begin{tikzpicture}
\draw (1,0)--(3,0);
\draw (5,0)--(7,0);

\draw (0.5,-1)--(2,0);
\draw (1.5,-1)--(2,0);
\draw (3.5,-1)--(2,0);

\draw (4.5,-1)--(6,0);
\draw (5.5,-1)--(6,0);
\draw (7.5,-1)--(6,0);

\node at (0.5,0) [] {$\ldots$};
\node at (4,0) [] {$\ldots$};
\node at (7.5,0) [] {$\ldots$};

\node at (4,0.5) {\Large{$H$}};

\node at (2,0) [rectangle, draw, inner sep=0.5mm, label=above:$x_{k}$, fill=blue!40!white] {};
\node at (6,0) [rectangle, draw, inner sep=0.5mm, label=above:$y_{k}$, fill=blue!40!white] {};

\node at (0.5,-1) [rectangle, draw, label=below:\footnotesize{$Y_{k,1}$}, fill=red!20!white] {};
\node at (1.5,-1) [rectangle, draw, label=below:\footnotesize{$Y_{k,2}$}, fill=red!40!white] {};
\node at (2.5,-1) {$\ldots$};
\node at (3.5,-1) [rectangle, draw, label=below:\footnotesize{$Y_{k,p-2}$}, fill=red!100!white] {};

\node at (4.5,-1) [rectangle, draw,  label=below:\footnotesize{$X_{k,1}$}, fill=blue!20!white] {};
\node at (5.5,-1) [rectangle, draw,  label=below:\footnotesize{$X_{k,2}$}, fill=blue!40!white] {};
\node at (6.5,-1) {$\ldots$};
\node at (7.5,-1) [rectangle, draw, label=below:\footnotesize{$X_{k,q-2}$}, fill=blue!100!white] {};
\end{tikzpicture}

\begin{tikzpicture}
\draw (1,0)--(3,0);
\draw (5,0)--(7,0);

\draw (0.5,-1)--(2,0);
\draw (1.5,-1)--(2,0);
\draw (3.5,-1)--(2,0);

\draw (4.5,-1)--(6,0);
\draw (5.5,-1)--(6,0);
\draw (7.5,-1)--(6,0);
\draw (9,-1)--(6,0);

\node at (0.5,0) [] {$\ldots$};
\node at (4,0) [] {$\ldots$};
\node at (7.5,0) [] {$\ldots$};

\node at (4,0.5) {\Large{$H'$}};

\node at (2,0) [rectangle, draw, inner sep=0.5mm, label=above:$x_{k}$, fill=blue!40!white] {};
\node at (6,0) [rectangle, draw, inner sep=0.5mm, label=above:$y_{k}$, fill=blue!40!white] {};

\node at (0.5,-1) [rectangle, draw, label=below:\footnotesize{$Y_{k,l}$}, fill=black!100!white] {};
\node at (1.5,-1) [rectangle, draw, label=below:\footnotesize{$X_{k,1}$}, fill=blue!40!white] {};
\node at (2.5,-1) {$\ldots$};
\node at (3.5,-1) [rectangle, draw, label=below:\footnotesize{$X_{k,p-2}$}, fill=blue!100!white] {};

\node at (4.5,-1) [rectangle, draw,  label=below:\footnotesize{$Y_{k,1}$}, fill=red!20!white] {};
\node at (5.5,-1) [rectangle, draw,  label=below:\footnotesize{$Y_{k,2}$}, fill=red!40!white] {};
\node at (6.5,-1) {$\ldots$};
\node at (7.5,-1) [rectangle, draw, label=below:\footnotesize{$Y_{k,q-2}$}, fill=blue!20!white] {};
\node at (9,-1) [rectangle, draw, label=below:\footnotesize{$X_{k,j}$}, fill=black!100!white] {};

\end{tikzpicture}
\end{center}
\caption{$G$--before switching, $H$--after component switch, $H'$--after tail switch.}
\label{figure-component-tail-smaller}
\end{figure}

\begin{eqnarray*}
\lefteqn{f(H')-f(H)}    \\
&=&         (\prod_{i=1 \atop i\neq j}^{p-1}(C_{k,i}+1))(D_{k,l}+1)(1+C) +
(\prod_{i=1 \atop i\neq l}^{q-1}(D_{k,i}+1))(C_{k,j}+1)(1+D) \\
& &         - (\prod_{i=1}^{p-2}(C_{k,i}+1))(D_{k,q-1}+1)(1+D) -
(\prod_{i=1}^{q-2}(D_{k,i}+1))(C_{k,p-1}+1)(1+C)       \\
&=&         (\prod_{i=1 \atop i \neq j}^{p-2}(C_{k,i}+1)-\prod_{i=1 \atop i
\neq l}^{q-2}(D_{k,i}+1))((C_{k,p-1}+1)(D_{k,l}+1)(1+C)-(C_{k,j}+1)(D_{k,q-1}+1)(1+D))  \\
&>&         0
\end{eqnarray*}
from the fact that $C_k > D_k$ by considering two cases (we skip the
calculations). Here $C_{k,i}$ ($D_{k,i}$) is the number of subtrees
in the component $X_{k,i}$ ($Y_{k,i}$) containing the corresponding
root. Note that one could have $j=p-1$ or $l=q-1$ and the argument
is still true.

Therefore such a tail switch is at least as `small' as the component switch. Similarly
for switching $Y_k$ with $X_{k+1}$.

{\bf CASE 3}. Consider the inverse of a degree-switch in Phase III as discussed
in Lemma~\ref{lemma-degree-switch}, with $d(x_k) > d(y_k)$ ($d(y_k) > d(x_{k+1})$).
 Similar to CASE 2, one can show that switching the smallest branch of $X_k$ ($Y_k$)
 with the largest branch of $Y_k$ ($X_{k+1}$) will result in a decrease at least as small.
\end{proof}

\section{Summary and questions}

In summary, we characterize the trees with given degree sequence that maximize the
number of subtrees. The proof is displayed with an algorithm consisting a sequence of
`subtree-switchings' that increase the number of subtrees. By back-tracking from the
 maximal tree along these switchings provides some insights on how to find the second
  maximal tree and $k$th maximal tree in general. Some interesting questions arise
  in this study.

Firstly, one may be able to show that the `degree-switches' are not necessary,
by showing the following, namely that what was achieved in Phase III is forced to
be true after Phases I and II.

\begin{conjecture}
Suppose that $H$ is a tree such that for any path between leaf vertices we have
\[ C^H_1 \geq D^H_1 \geq C^H_2 \geq \ldots \geq C^H_n \geq D^H_n \]
for a path of odd length $2n-1$, and
\[ C^H_1 \geq D^H_1 \geq C^H_2 \geq \ldots \geq D^H_n \geq C^H_{n+1} \]
for a path of even length $2n$.
Then we must have that
\[ d(x_1) \geq d(y_1) \geq d(x_2) \geq \ldots \geq d(x_n) \geq d(y_n) \]
for the path of odd length, and
\[ d(x_1) \geq d(y_1) \geq d(x_2) \geq \ldots \geq d(y_n) \geq d(x_{n+1}) \]
for the path of even length.
\end{conjecture}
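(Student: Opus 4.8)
The plan is to root $H$ at the vertex (or one of the two adjacent vertices) contained in the most subtrees, exactly as in the proof of Theorem~\ref{theorem25}, and to translate the path conditions into a statement about this rooted structure. For a non-root vertex $v$, let $g(v)$ be the number of subtrees of its descendant branch that contain $v$, so that $g(v) = \prod_{c}(1 + g(c))$, the product running over the $d(v)-1$ children of $v$. On a path through the root, each $C^H_k$ (resp.\ $D^H_k$) is precisely such a child-product with the factors of the on-path neighbours removed, so the hypothesized chain $C^H_1 \ge D^H_1 \ge C^H_2 \ge \ldots$ becomes a family of inequalities comparing child-products of vertices occupying matched positions along the various paths, and the target chain $d(x_1) \ge d(y_1) \ge \ldots$ becomes the assertion that the vertex with the larger branch-product has at least as many children.

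First I would prove, by induction on the height of the subtrees being compared, the strengthened statement that whenever $u$ is more central than $w$ in the ordering induced by some path, not only is $g(u) \ge g(w)$, but the descending sequence of children branch-values of $u$ \emph{dominates termwise} the corresponding sequence for $w$. The pairing of children needed here should be supplied by the hypothesis holding on \emph{every} path: extending a path that separates $u$ from $w$ into the respective sub-branches exhibits each child of $w$ alongside a child of $u$ in a matched position, and the branch ordering on the extended path yields the termwise comparison. Granting termwise domination of the children, the desired $d(u) \ge d(w)$ is exactly the claim that $u$ has at least as many children as $w$.

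The hard part -- and the reason this remains a conjecture -- is precisely this last step: a product $\prod_{i=1}^{p}(1+a_i)$ with few but large factors can equal or exceed a product $\prod_{i=1}^{q}(1+b_i)$ with $q > p$ smaller factors, so $g(u) \ge g(w)$ together with termwise domination of the matched factors does \emph{not} by itself force $p \ge q$. To close the gap one must rule out a low-degree vertex carrying an abnormally large branch. I expect the route through is to use the all-paths hypothesis more aggressively: given a putative first failure $d(x_{k+1}) > d(y_k)$ with $D^H_k \ge C^H_{k+1}$, rotate the path at $x_{k+1}$ into its largest descendant branch, compare against the path rotated into $y_k$'s largest branch, and try to exhibit two vertices whose branch-values violate the required ordering. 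Equivalently, I would bring in the tail inequalities $C^H_{\ge 1} \ge D^H_{\ge 1} \ge \ldots$ -- also guaranteed after Phases I and II via $C^H_{\ge k} = C^H_k(1 + C^H_{\ge k+1})$ -- which are strictly stronger than the branch inequalities in the stated hypothesis and which directly constrain how large a single branch at a low-degree vertex can be.

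Finally, once the degree ordering is established on every path through the root, I would transfer it to an arbitrary path by the device used at the end of the proof of Theorem~\ref{theorem25}: any two adjacent path-vertices can be placed on a path through the root, where the established ordering applies. The principal obstacle throughout is the mismatch between the multiplicative branch data and the additive degree data; making the rotation-and-tail argument actually produce a contradiction, rather than a consistent but uninformative inequality, is where the real work lies -- and it is conceivable that the bare branch conditions in the statement are genuinely insufficient, so that the tail conditions (which Phases I and II also enforce) must be adjoined to the hypothesis.
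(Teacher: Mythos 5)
The statement you are trying to prove is not proved in the paper at all: it is posed as an open conjecture in Section 4, precisely because the authors could not show that the orderings enforced by Phases I and II force the degree ordering (which is why they keep the Phase III degree-switches in the algorithm). So there is no proof in the paper to compare yours against, and your submission does not close the conjecture either --- it is a plan with an explicitly unfilled step, which you candidly flag yourself. Concretely, the gap is exactly where you locate it: writing $C^H_k$ and $D^H_k$ as products $\prod_c (1+g(c))$ over off-path neighbours, the hypothesis gives you inequalities between such products (and, with more work, termwise domination of matched factors), but a product with few large factors can dominate a product with many small factors, so nothing in this data bounds the \emph{number} of factors, i.e.\ the degree. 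A degree-$2$ vertex carrying one enormous branch can satisfy every $C\geq D$ chain on every path while sitting in a position that the conclusion would require to have large degree; ruling this out is the entire content of the conjecture, and your rotation-and-tail-inequality strategy for doing so is only sketched, not executed.

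Two further cautions. First, your fallback of adjoining the tail inequalities $C^H_{\geq 1}\geq D^H_{\geq 1}\geq\ldots$ to the hypothesis would change the statement being proved: the conjecture as written assumes only the branch inequalities, and whether the tail inequalities are implied by them (rather than independently enforced by Phase II) is itself unclear. Second, the ``termwise domination'' step of your induction relies on a matching of children of $u$ with children of $w$ supplied by extending paths into the respective branches; when $d(u)\neq d(w)$ there is no canonical bijection, and the path-extension device only compares one chosen branch of $u$ against one chosen branch of $w$ at a time, so even this intermediate claim needs a genuine argument. In short: the approach is a reasonable attack on the conjecture and correctly identifies the obstruction, but no proof has been produced, consistent with the statement's status in the paper as an open problem.
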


Secondly, as part of the effort to study trees that are close to maximal and their
 numbers of subtrees, one encounters several questions regarding the switch operations
  on a greedy tree.

\begin{prob} For two vertices $x_k$ and $y_k$ on the path $\ldots x_2x_1(z)y_1y_2\ldots $,
will one particular type of switch be smaller and which one?
\end{prob}

\begin{prob} If $w$ is a common ancestor with the greatest height to leaf vertices
 $v_1$, $v_2$, $v_3$, and $f_T(x_i) \geq f_T(y_i)\geq f_T(z_i) \geq f_T(x_{i+1})$
 for all $i$ in the labellings of $wx_1x_2 \ldots v_1$, $wy_1y_2 \ldots v_2$ and
 $wz_1z_2 \ldots v_3$. Will there always be a switch on $y_i$ and $z_i$ that is
  smaller than any switch on $x_i$ and $z_i$?
\end{prob}

\begin{prob}
In a path labeled as $\ldots x_2x_1(z)y_1y_2 \ldots $, if $j<k$, will a switch on
 $x_k$ and $y_k$ always be smaller than the same switch on $x_j$ and $y_j$?
\end{prob}

These seemingly elementary questions do not seem to have a straight forward answer.
However, the answers to these questions will provide  characterizations of the
close-to-maximal trees.



\end{document}